\documentclass{amsart}
\usepackage{amsmath,amssymb,amsthm,bm}
\usepackage{graphicx,enumerate}
\usepackage{layout} 
\usepackage{longtable}

\theoremstyle{plain}
\newtheorem{theorem}{Theorem}[section]
\newtheorem{lemma}[theorem]{Lemma}
\newtheorem{proposition}[theorem]{Proposition}

\theoremstyle{definition}

\theoremstyle{remark}

\newcommand{\bZ}{\mathbb{Z}}
\newcommand{\bQ}{\mathbb{Q}}
\newcommand{\bR}{\mathbb{R}}
\newcommand{\bC}{\mathbb{C}}

\newcommand{\fO}{\mathfrak{O}}
\newcommand{\fA}{\mathfrak{A}}
\newcommand{\fH}{\mathfrak{H}} 



\title[graded ring of Siegel modular forms of degree two]
{On the graded ring of Siegel modular forms of degree two with respect to a non-split symplectic group}

\author{Hidetaka Kitayama}


\begin{document}

\begin{abstract} 
We will give the graded ring of Siegel modular forms of degree two 
with repsect to a certain discrete subgroup of a non-split symplectic group 
explicitly. 
\end{abstract}

\maketitle
\vspace*{-5mm}

%
\section{Introduction}\label{sec:intro}
%

The purpose of this paper is to give explicitly the graded ring of 
Siegel modular forms of degree two with respect to a certain discrete subgroup 
of a non-split symplectic group. 
(Theorem \ref{thm:main} below).  
In this section, we give an introduction for our main result and the way to prove it. 

Let $B$ be an indefinite quaternion algebra over $\bQ$ of discriminant $D$ 
with the canonical involution $\bar{\ }$. 
We define the group $U(2;B)$ as the unitary group with respect to the quaternion 
hermitian space of rank two, i.e. 
\[ U(2;B) := \left\{ g\in GL(2;B)\ \left|\ {}^t\overline{g} \begin{pmatrix} 0&1\\1&0 \end{pmatrix} g =\begin{pmatrix} 0&1\\1&0 \end{pmatrix} \right. \right\} , \] 
where ${}^t\overline{g}=\begin{pmatrix} \overline{a}&\overline{c} \\ \overline{b}&\overline{d} \end{pmatrix}$ 
for $g=\begin{pmatrix} a&b\\c&d \end{pmatrix}$. 
We can regard $U(2;B)$ as a subgroup of $Sp(2;\bR)$ by fixing an isomorphism 
$U(2;B)\otimes _{\bQ}\bR \simeq Sp(2;\bR)$. 
If $D\not= 1$, then $U(2;B)$ is a non-split $\bQ$-form of $Sp(2;\bR)$. 
Let $\mathfrak{O}$ be the maximal order of $B$, 
which is unique up to conjugation. 
If we take a positive divisor $D_1$ of $D$ and put $D_2:=D/D_1$, 
then there is the unique maximal two-sided ideal $\mathfrak{A}$ of $\mathfrak{O}$ 
such that $\mathfrak{A}\otimes _{\bZ}\bZ _p= \mathfrak{O}_p$ if $p\mid D_1$ or $p\nmid D$,  
and $\mathfrak{A}\otimes _{\bZ}\bZ _p= \pi\mathfrak{O}_p$ if $p\mid D_2$, 
where $\pi$ is a prime element of $\mathfrak{O}_p$.  
We treat a discrete subgroup of $Sp(2;\bR)$ defined by 
\[ \Gamma _{\fA}=\Gamma (D_1,D_2) := U(2;B) \cap 
\begin{pmatrix} \fO & \fA ^{-1} \\ \fA & \fO \end{pmatrix}. \] 

We are interested in studying Siegel modular forms with respect to $\Gamma_{\fA}$. 
We denote by $M_k(\Gamma)$ the space of Siegel modular forms 
of weight $k$ with respect to $\Gamma=\Gamma_{\fA}=\Gamma(D_1,D_2)$. 
The main theorem of this paper is the following: 
\begin{theorem} \label{thm:main} 
The graded ring of Siegel modular forms with resptect to $\Gamma(1,6)$ 
is given explicitly by 
\begin{align*} 
\displaystyle\bigoplus _{k=0}^{\infty} M_{k}(\Gamma(1,6)) 
 &= \bC [E_2,E_4,\chi_{5a},E_6] \oplus \chi_{5b}\bC [E_2,E_4,\chi_{5a},E_6] \\ 
 &{\hspace{5mm}} \oplus \chi_{15}\bC [E_2,E_4,\chi_{5a},E_6] \oplus \chi_{5b}\chi_{15}\bC [E_2,E_4,\chi_{5a},E_6],  
\end{align*}  
where we denote by $E_k$ ($k=2,4,6$) the Eisenstein series which are defined in 
\cite{Hir99}, 
and denote by $\chi_{5a}$, $\chi_{5b}$ and $\chi_{15}$ the Siegel cusp forms 
of weight $5$, $5$ and $15$ respectively, 
which are defined in Proposition \ref{prop:weight5,10} and \ref{prop:weight15,20} 
below. 
The four modular forms $E_2$, $E_4$, $\chi_{5a}$ and $E_6$ are
agebraically independent over $\bC$, 
and ${\chi_{5b}}^2$ and ${\chi_{15}}^2$ can be written by $E_2$, $E_4$, $\chi_{5a}$ 
and $E_6$. 
Fourier coefficients of these forms are computable and given in Appendix. 
\end{theorem}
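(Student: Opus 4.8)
The plan is to write $A$ for the graded ring on the right-hand side, $M=\bigoplus_{k\ge 0}M_k(\Gamma(1,6))$, and $R=\bC[E_2,E_4,\chi_{5a},E_6]$, and to prove $A=M$. Granting the explicit constructions of Propositions~\ref{prop:weight5,10} and \ref{prop:weight15,20}, which realise $\chi_{5a},\chi_{5b}$ in $M_5(\Gamma(1,6))$ and $\chi_{15}$ in $M_{15}(\Gamma(1,6))$ as cusp forms with Fourier expansions that are in principle computable (as tabulated in the Appendix), and granting $E_2,E_4,E_6\in M(\Gamma(1,6))$ from \cite{Hir99}, the inclusion $A\subseteq M$ is immediate; the real content is to identify the relations among these forms and to show that they generate $M$.

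First I would show that $E_2,E_4,\chi_{5a},E_6$ are algebraically independent over $\bC$. Since the Baily--Borel compactification of $\Gamma(1,6)\backslash\fH$ is a normal projective variety of dimension $3$, the ring $M$ has Krull dimension $4$; hence it is enough to verify that no nontrivial polynomial relation exists, which I would do by fixing one convenient weight, listing all monomials $E_2^{a}E_4^{b}\chi_{5a}^{c}E_6^{d}$ of that weight, and checking by means of the tabulated Fourier coefficients that the associated coefficient vectors (taken to a sufficiently high index) are linearly independent, equivalently that a suitable Jacobian does not vanish. Next I would establish the quadratic relations ${\chi_{5b}}^{2}=F_{10}$ and ${\chi_{15}}^{2}=F_{30}$ with explicit $F_{10},F_{30}\in R$: the spaces $M_{10}(\Gamma(1,6))$ and $M_{30}(\Gamma(1,6))$ being finite dimensional, one writes ${\chi_{5b}}^{2}$ and ${\chi_{15}}^{2}$ as linear combinations of the weight-$10$, resp.\ weight-$30$, monomials in $E_2,E_4,\chi_{5a},E_6$ by comparing finitely many Fourier coefficients. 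Given these relations, $A=R\oplus\chi_{5b}R\oplus\chi_{15}R\oplus\chi_{5b}\chi_{15}R$ is a ring, finitely generated as an $R$-module; to see that the displayed sum is \emph{direct}, hence that $A$ is a free $R$-module of rank $4$, it suffices to show that $F_{10}$, $F_{30}$ and $F_{10}F_{30}$ are non-squares in the unique factorization domain $R$ — for instance by exhibiting a height-one prime dividing one of them to odd multiplicity, read off from the orders of vanishing or leading Fourier coefficients of $\chi_{5b}$ and $\chi_{15}$.

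Once $A$ is known to be free of rank $4$, its Hilbert series is
\[ H_A(t)=\frac{1+t^{5}+t^{15}+t^{20}}{(1-t^{2})(1-t^{4})(1-t^{5})(1-t^{6})}. \]
The remaining, and principal, task is to prove $A=M$. I would do this by a dimension count: using a dimension formula for $M_k(\Gamma(1,6))$ — obtained from the contributions of the conjugacy classes of $\Gamma(1,6)$ to the Riemann--Roch/trace formula, as in the treatment of the groups $\Gamma(D_1,D_2)$ — compute $\dim_{\bC}M_k(\Gamma(1,6))$ for every $k\ge 0$, form the generating series $H_M(t)=\sum_{k\ge 0}\dim_{\bC}M_k(\Gamma(1,6))\,t^{k}$, and verify the identity of rational functions $H_M(t)=H_A(t)$. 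Since $A\subseteq M$ and both are graded with $\dim_{\bC}A_k=\dim_{\bC}M_k<\infty$ for all $k$, this forces $A_k=M_k$ for all $k$, i.e.\ $A=M$. If a closed dimension formula proves unwieldy, an alternative is to apply the Siegel $\Phi$-operator: $\Phi$ sends $M(\Gamma(1,6))$ onto a ring of elliptic modular forms on the boundary curve, carrying $E_2,E_4,E_6$ to Eisenstein series and killing $\chi_{5a},\chi_{5b},\chi_{15}$; after checking that $\Phi(R)$ already surjects onto the boundary ring one is reduced to computing the cuspidal part, pushed up by induction on the weight using surjectivity of multiplication maps in the relevant range, anchored by low-weight computations.

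The step I expect to be the main obstacle is exactly this last one. One must pin down $\dim_{\bC}M_k(\Gamma(1,6))$ \emph{exactly} for small $k$ in order to rule out an unexpected low-weight generator — in particular a spurious small-weight cusp form not lying in $R+\chi_{5b}R+\chi_{15}R+\chi_{5b}\chi_{15}R$ — and then control all higher weights, whether through the global Hilbert-series identity or through the inductive multiplication argument; establishing the dimension formula itself (evaluating the elliptic, parabolic and remaining terms for this particular non-split group) is the heaviest ingredient. A secondary difficulty, more bookkeeping than conceptual, is making the constructions of $\chi_{5b}$ and $\chi_{15}$ concrete enough, with Fourier expansions of sufficient length, to carry out the linear-algebra verifications of the algebraic independence and of ${\chi_{5b}}^{2}=F_{10}$, ${\chi_{15}}^{2}=F_{30}$, and to confirm in passing that $\chi_{5b}\notin R$.
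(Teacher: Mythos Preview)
Your proposal is correct and follows essentially the same architecture as the paper: establish $A\subseteq M$, prove $A$ is free of rank $4$ over the polynomial ring $R=\bC[E_2,E_4,\chi_{5a},E_6]$ via the quadratic relations and a non-square/directness check, compute the Hilbert series of $A$, and match it against the generating function of $\dim M_k$ obtained from the explicit dimension formula together with the low-weight determinations of Proposition~\ref{prop:lowweight}. Two remarks are worth making. First, the paper gets algebraic independence of $E_2,E_4,\chi_{5a},E_6$ for free from the very construction of $\chi_{15}$: by Proposition~\ref{prop:weight15,20} one has $\{E_2,E_4,\chi_{5a},E_6\}_*=\chi_{5b}\chi_{15}\neq 0$, and Proposition~\ref{prop:diffoperator}(ii) then gives independence without any separate Fourier-coefficient or Jacobian computation; your directness criterion (checking that $F_{10},F_{30},F_{10}F_{30}$ are non-squares in the UFD $R$) is, conversely, a bit cleaner than the paper's argument, which invokes irreducibility of $\chi_{15}^2$ in $R$ and an infinite-descent step. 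Second, your proposed alternative via the Siegel $\Phi$-operator does not apply here as stated: since $U(2;B)$ is a non-split $\bQ$-form, $\Gamma(1,6)$ has only zero-dimensional (point) cusps, so there is no boundary curve onto whose ring of elliptic modular forms $\Phi$ could surject; this is precisely why the paper relies entirely on the dimension formula and direct Fourier computations.
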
 

Explicit constructions of the graded ring of Siegel modular forms of split case  
have been studied by many authors, for example, 
Igusa\cite{Igu62}, Ibukiyama\cite{Ibu91}, Freitag and Salvati Manni\cite{FreSal04},  
Gunji\cite{Gun04} and Aoki and Ibukiyama\cite{AokIbu05}, 
but, as far as the author knows, 
no results were known for the case of non-split $\bQ$-forms of $Sp(2;\bR)$. 
We are short of available methods in the case of non-split $\bQ$-forms  
because they have only point cusps. 
Hirai \cite{Hir99} determined the spaces of low weights for $\Gamma(6,1)$ 
by using his explicit formula of Fourier coefficients of the Eisenstein series 
(cf. Proposition \ref{prop:HiraiEisen}), Oda lifting (cf. \cite{Oda77},\cite{Sug84}) 
and Hashimoto's explicit dimension formula (cf. \cite{Has84}),   
but he did not obtain the graded ring.  

Our motivations for this study are as follows. 
First, we are interested to study a possible correspondence between 
Siegel modular forms for split $\bQ$-forms and non-split $\bQ$-forms. 
In the case where $B$ is definite, 
Ibukiyama has been studying a generalization of Eichler-Jacquet-Langlands correspondence 
to the case of $Sp(2)$, 
by means of comparison of explicit dimension formulas in \cite{Ibu85},\cite{HI85},\cite{Ibu07}, and of Euler factors of spinor L-functions in \cite{Ibu84}. 
In our previous paper, 
we obtained an explicit dimension formula for our group $\Gamma_{\fA}$. 
In this paper, we construct Siegel modular forms for $\Gamma_{\fA}$ explicitly. 
We will give some examples of Euler factors for them on another occasion. 
Our results would be used for a similar comparison as \cite{Ibu84}. 
Second, we are interested in studying Siegel modular forms for non-split $\bQ$-forms 
explicitly. 
The result and method of this paper would be used in further investigations.  

We summarize the way to prove our main theorem, Theorem \ref{thm:main}. 
The dimension formula which we obtained in our previous work 
(see subsection \ref{subsec:dimension_formula}) 
plays a crucial role in our work. 
The first step to prove Theorem \ref{thm:main} is to determine 
the spaces of weight $k\leq 4$. 
Note that the formula can not be applied for the spaces of weight $k\leq 4$. 
We will prove Proposition \ref{prop:lowweight} in section \ref{sec:lowweight}. 
\begin{proposition} \label{prop:lowweight} 
\begin{align*} 
M_1(\Gamma (1,6)) &= \{ 0\} ,\hspace{10mm}   M_2(\Gamma (1,6)) = \bC E_2, \\ 
M_3(\Gamma (1,6)) &= \{ 0\} ,\hspace{10mm}  M_4(\Gamma (1,6)) = \bC {E_2}^2\oplus \bC E_4. 
\end{align*} 
\end{proposition}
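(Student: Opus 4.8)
The plan is to establish, for each $k\in\{1,2,3,4\}$, matching lower and upper bounds for $\dim_{\bC}M_k(\Gamma(1,6))$. The lower bounds are immediate from \cite{Hir99}: the Eisenstein series $E_2$ and $E_4$ (see Proposition \ref{prop:HiraiEisen}) are nonzero elements of $M_2$ and $M_4$, and since $\bigoplus_k M_k(\Gamma(1,6))$ is a graded ring we also have ${E_2}^2\in M_4$; comparing a couple of Fourier coefficients of ${E_2}^2$ and $E_4$ (computable from Hirai's formula, and recorded in the Appendix) shows that these two forms are linearly independent. Hence $\dim M_2\ge 1$ and $\dim M_4\ge 2$, and it remains to prove $M_1=M_3=\{0\}$, $\dim M_2\le 1$ and $\dim M_4\le 2$.

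For the upper bounds the basic tool is the Siegel $\Phi$-operator. Since $\Gamma(1,6)$ is a discrete subgroup of the $\bQ$-rank-one group $U(2;B)$, its only boundary components are point cusps, so $\Phi$ maps $M_k(\Gamma(1,6))$ into $\bigoplus_{c}\bC$ (one factor for each cusp $c$) with kernel the space $S_k(\Gamma(1,6))$ of cusp forms; in particular $\dim M_k\le\dim S_k+\#\{\text{cusps of }\Gamma(1,6)\}$. I would first enumerate the cusps of $\Gamma(1,6)$ together with the automorphy character attached to each cusp stabilizer. For the odd weights $k=1,3$ this character is nontrivial at every cusp, so $\Phi$ vanishes identically on $M_k$ and $M_k=S_k$; for $k=2$ and $k=4$ one reads off the rank of $\Phi$ on $M_k$ from the constant terms of Hirai's Eisenstein series, which, with the cusp count, yields $\dim M_2\le\dim S_2+1$ and $\dim M_4\le\dim S_4+\dim\Phi(M_4)$.

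It therefore remains to control the spaces $S_k(\Gamma(1,6))$ for $k\le 4$, and this is the crux, since the dimension formula of subsection \ref{subsec:dimension_formula} is \emph{not} available in this range. The tool I would use is the Oda--Sugano lifting (\cite{Oda77},\cite{Sug84}): one shows, as in Hirai's treatment of low weights for $\Gamma(6,1)$ adapted to our group, that in this low-weight range every holomorphic cusp form on $\Gamma(1,6)$ is the lift of an elliptic cusp form of weight $2k-2$ of the level and nebentype prescribed by $\Gamma(1,6)$ (subject to the relevant nonvanishing condition for the lift). Running through the cases, the pertinent space of elliptic cusp forms — or its image under the lift — is zero for $k=1,2,3$, giving $S_1=S_2=S_3=\{0\}$, while for $k=4$ it is small and explicitly describable; evaluating it and combining with the bound $\dim M_4\le\dim S_4+\dim\Phi(M_4)$ gives $\dim M_4\le 2$. (Alternatively, one can carry out a direct Fourier-coefficient analysis of the candidate weight-$4$ forms.)

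Assembling the pieces: $M_1=S_1=\{0\}$; $M_2=S_2\oplus\bC E_2=\bC E_2$; $M_3=S_3=\{0\}$; and $M_4$ is two-dimensional and contains the linearly independent forms ${E_2}^2$ and $E_4$, hence $M_4=\bC {E_2}^2\oplus\bC E_4$. I expect the weight-$4$ step to be the main obstacle: the weights $k\le 3$ reduce to cusp-counting together with the (empty or degenerate) source of the lift, whereas in weight $4$ one must exclude unexpected cusp forms without help from the dimension formula, and it is the lifting theory — or a hands-on computation with Fourier coefficients — that must carry that argument.
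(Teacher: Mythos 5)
Your lower bounds and the reduction of odd weights to cusp forms are fine (the paper gets the latter directly from Proposition \ref{prop:Fourier}; for $\Gamma_{\fA}$ the Fourier expansion has a single constant term, so no enumeration of cusps or cusp characters is needed, and $M_k=S_k\oplus\bC E_k$ for even $k$ is immediate). The genuine gap is at what you yourself identify as the crux: the control of $S_k(\Gamma(1,6))$ for $k\le 4$. You propose to get this from the Oda--Sugano lifting by asserting that ``every holomorphic cusp form on $\Gamma(1,6)$ in this range is a lift of an elliptic cusp form of weight $2k-2$.'' No such surjectivity statement is available: the lifting of \cite{Oda77},\cite{Sug84} \emph{constructs} forms from elliptic cusp forms, it does not exhaust $S_k$, and Hirai's low-weight analysis for $\Gamma(6,1)$ in \cite{Hir99} relied on the dimension formula (valid only for $k\ge 5$) in combination with such constructions, not on an exhaustion theorem at weights $\le 4$. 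Note also that $S_4(\Gamma(1,6))\ne\{0\}$ (the form $E_4-{E_2}^2$ is a cusp form), so the weight-$4$ step really requires the sharp bound $\dim S_4\le 1$, which your argument does not deliver; the parenthetical fallback ``a direct Fourier-coefficient analysis of the candidate weight-$4$ forms'' is not an argument, since the problem is precisely to exclude forms that are not among the candidates you can write down.

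The paper closes this gap by an elementary multiplicative trick that avoids any lifting: if $f$ is a putative nonzero cusp form of weight $k\in\{1,2,3\}$, then $f^{6/k}\in S_6(\Gamma(1,6))$, and its Fourier coefficients at $(0,0,0)$, $(2,1,-1)$, $(2,0,-1)$ all vanish because each term in the product carries a factor $C_f(0,0,0)=0$; since the dimension formula (valid at $k=6$) together with the explicit Fourier tables shows $M_6=\bC{E_2}^3\oplus\bC E_2E_4\oplus\bC E_6$ and that these three coefficients determine an element of that space, one gets $f^{6/k}=0$, a contradiction. For weight $4$, a form $f$ linearly independent of ${E_2}^2$ and $E_4$ can be normalized so that $C_f(0,0,0)=C_f(2,1,-1)=0$; then $f^2\in S_8$ has vanishing coefficients at $(0,0,0)$, $(2,1,-1)$, $(2,0,-1)$, $(4,2,-2)$, and the explicitly computed weight-$8$ space forces $f^2=0$. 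If you want to salvage your outline, you must replace the lifting claim by an argument of this kind (or prove the exhaustion statement you invoke, which would be a substantial theorem in its own right).
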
 

\noindent 
The second step to prove Theorem \ref{thm:main} is to construct 
$\chi_{5a}$, $\chi_{5b}$ and $\chi_{15}$. 
Generally speaking, it is difficult to construct modular forms of odd weight. 
As for $\chi_{5a}$ and $\chi_{5b}$, we will prove Proposition \ref{prop:weight5,10} 
in section \ref{sec:weight5,10}  
by detailed calculation of Fourier coefficients of the space of weight $10$ 

\begin{proposition} \label{prop:weight5,10} 
The Siegel cusp forms $\chi_{5a}$ and $\chi_{5b}$ of weight $5$ exist and 
are determined uniquely up to sign by the following relations:   
\begin{align*} 
{\chi_{5a}}^2 &= \tfrac{31513745731}{416023384089600}E_{10} -\tfrac{126433528597}{311423218947072}{E_2}^5+\tfrac{11304517601}{14285468759040}{E_2}^3E_4 \\ 
 &{\hspace{5mm}} -\tfrac{41742579637}{1557116094735360}{E_2}^2E_6-\tfrac{38947571}{120147846816}E_2{E_4}^2-\tfrac{1000259890201}{9083177219289600}E_4E_6, \\ 
{\chi_{5b}}^2 &= \tfrac{31513745731}{416023384089600}E_{10} +\tfrac{266799861}{1281577032704}{E_2}^5-\tfrac{261925781}{1587274306560}{E_2}^3E_4 \\ 
 &{\hspace{5mm}} -\tfrac{1914649869}{6407885163520}{E_2}^2E_6+\tfrac{935053847}{51903869824512}E_2{E_4}^2+\tfrac{551346719209}{3406191457233600}E_4E_6.  
\end{align*} 
\end{proposition}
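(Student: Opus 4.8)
The plan is to reduce the proposition to a finite computation with Fourier coefficients, the two external inputs being the dimension formula of subsection \ref{subsec:dimension_formula} (valid for weight $\geq 5$) and Hirai's explicit Fourier expansions of the Eisenstein series (Proposition \ref{prop:HiraiEisen}). Since $\bigoplus_k M_k(\Gamma(1,6))$ is an integral domain, any form with a prescribed square is determined up to sign, so it suffices to produce $\chi_{5a}$ and $\chi_{5b}$ together with the stated relations. From the dimension formula I would first record $\dim S_5(\Gamma(1,6))=2$ and $\dim S_{10}(\Gamma(1,6))=3$; the former already shows that weight-$5$ cusp forms exist, and since the ring is a domain the multiplication map $\mathrm{Sym}^2 S_5\to S_{10}$ is injective, hence (both sides being $3$-dimensional) an isomorphism, so $S_{10}$ is spanned by products of weight-$5$ cusp forms.

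The computational core is in weight $10$. Using Proposition \ref{prop:HiraiEisen} I would tabulate enough Fourier coefficients of $E_2,E_4,E_6$ and of the weight-$10$ Eisenstein series $E_{10}$, indexed by quaternion-hermitian matrices, and then form by convolution the expansions of the five monomials $E_2^5,\,E_2^3E_4,\,E_2E_4^2,\,E_2^2E_6,\,E_4E_6$. Let $V\subseteq M_{10}$ be the span of these six forms. The Fourier data should show that the six forms are linearly independent, so that $\dim V=6$, and that the subspace $V\cap S_{10}$ of cusp forms in $V$ is $2$-dimensional; I would then write down an explicit basis $h_1,h_2$ of $V\cap S_{10}$ as $\bQ$-combinations of the six generators.

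It remains to identify the square roots. Since $V\cap S_{10}$ has codimension one in $S_{10}=\mathrm{Sym}^2 S_5$, the condition $\chi^2\in V$ on a form $\chi\in S_5$ amounts to a single linear condition on $\chi^2$, hence to a quadratic form in $\chi$; its zero locus in $S_5\cong\bC^2$ is a union of two lines (a double line would make $\chi_{5a}$ and $\chi_{5b}$ proportional, contradicting the two displayed relations, while the whole plane would contradict $\dim(V\cap S_{10})=2$). These lines are $\bC\chi_{5a}$ and $\bC\chi_{5b}$; being distinct, they span $S_5$, and $\chi_{5a}^2,\chi_{5b}^2\in V\cap S_{10}$. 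To locate them explicitly I would seek, for $(\alpha:\beta)\in\mathbf P^1$, a formal expansion $f=\sum_T a(T)q^T$ with $f^2=\alpha h_1+\beta h_2$: extracting the coefficients recursively, beginning with the smallest matrix $T_0$ in the support (where $a(T_0)^2$ equals the corresponding coefficient of $\alpha h_1+\beta h_2$), gives an overdetermined system over $\bQ$ that is consistent for exactly two ratios $(\alpha:\beta)$, and each resulting formal solution is then the Fourier expansion of $\pm\chi_{5a}$ or $\pm\chi_{5b}$, formal square roots being unique up to sign. Normalizing the first nonzero Fourier coefficient fixes the sign, the two values $\alpha h_1+\beta h_2$ are exactly the right-hand sides in the proposition, and continuing the recursion yields the Fourier coefficients listed in the Appendix.

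The hard part will lie entirely in the bookkeeping of the last two steps: obtaining Hirai's coefficients for enough matrices $T$ to separate the six weight-$10$ forms and to make the square-root recursion determined, identifying correctly the cusp conditions for $\Gamma(1,6)$, solving the resulting moderately large linear system over $\bQ$, and---most delicately---keeping the normalizations of $\chi_{5a}$ and $\chi_{5b}$ coherent both with Proposition \ref{prop:lowweight} and with the way these forms are used later in the paper.
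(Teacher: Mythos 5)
Your overall shape --- dimension formula plus Hirai's Fourier expansions, the observation that ``$\chi^2\in V$'' is a single quadratic condition on $\chi\in S_5$, and uniqueness up to sign from the graded ring being an integral domain --- is close in spirit to the paper's argument, but the numerical backbone of your plan is wrong and the error is load-bearing. The dimension table gives $\dim_{\bC}S_{10}(\Gamma(1,6))=6$, not $3$, hence $\dim_{\bC}M_{10}(\Gamma(1,6))=7$; note also that your own assertion $\dim V=6$ is already incompatible with $\dim S_{10}=3$. Consequently the multiplication map $\mathrm{Sym}^2S_5\to S_{10}$ is injective (your domain argument for that is fine) but far from surjective, and $V\cap S_{10}$, being the kernel of the constant-term functional restricted to the six-dimensional $V$, is five-dimensional, not two-dimensional. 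The computational core of your proposal --- searching for formal square roots of the pencil $\alpha h_1+\beta h_2$ inside a two-dimensional $V\cap S_{10}$ and asserting consistency ``for exactly two ratios $(\alpha:\beta)$'' --- is therefore a search in the wrong space, and the finiteness claim has no justification once the relevant space is five-dimensional; nor is it known a priori that $\chi_{5a}^2$ and $\chi_{5b}^2$ lie in $V$ at all: that only comes from the codimension-one position of $V$ in $M_{10}$ (the paper's Lemma \ref{lem:weight5,10}) combined with an explicit Fourier computation.

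Your treatment of the degenerate cases of the quadratic form is also not sound: the ``whole plane'' case is excluded by appealing to $\dim(V\cap S_{10})=2$, which is false (with the true dimensions the three-dimensional image of $\mathrm{Sym}^2S_5$ could a priori sit inside the five-dimensional $V\cap S_{10}$), and the double-line case is excluded because it would ``contradict the two displayed relations'', which is circular, since those relations are exactly what is to be proved. The paper settles both points by Fourier bookkeeping with actual forms: it first shows (Lemma \ref{lem:weight5}, using the weight-$10$ table) that a basis $\chi_{5a},\chi_{5b}$ of $M_5$ can be normalized by its coefficients at $(2,1,-1)$ and $(2,0,-1)$, then computes by convolution the coefficients of $\chi_{5a}(\alpha\chi_{5a}+\beta\chi_{5b})\in V$ and of $\chi_{5b}(\chi_{5b}-r\chi_{5a})\in V$, and uses the coefficient at $(5,1,-2)$ to force $\beta=0$ and $r=0$; the two displayed identities drop out, and $\chi_{5a}\chi_{5b}\notin V$ (i.e.\ nondegeneracy of your quadratic form) is obtained as a consequence, not used as an input. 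If you wish to keep your square-root recursion you would additionally have to justify that a consistent formal square root of an element of $V\cap S_{10}$ really is the expansion of a weight-$5$ modular form for $\Gamma(1,6)$; the paper avoids this issue entirely by working only with the two genuine forms guaranteed by $\dim_{\bC}M_5(\Gamma(1,6))=2$.
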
 
\noindent 
As for $\chi_{15}$, 
we will prove Proposition \ref{prop:weight15,20} in section \ref{sec:weight15,20}. 
We denote by $\{ E_2,E_4,\chi_{5a},E_6\}_*$ the Siegel cusp form of weight $20$ 
obtained from $E_2$, $E_4$, $\chi_{5a}$ and $E_6$ by the differential operator 
which is reviewed in subsection \ref{subsec:differential_operator}. 
\begin{proposition} \label{prop:weight15,20} 
The Siegel cusp form $\{ E_2,E_4,\chi_{5a},E_6\}_*$ is divisible by $\chi_{5b}$, 
so we can define $\chi_{15} := \{ E_2,E_4,\chi_{5a},E_6\}_* / \chi_{5b}$.  
\end{proposition}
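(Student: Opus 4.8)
The plan is to exploit the description of $\{\,\cdot\,\}_*$ as a determinant, together with the quadratic relation $\chi_{5b}^2=P(E_2,E_4,\chi_{5a},E_6)$ of Proposition~\ref{prop:weight5,10}. Write $g_1=E_2,g_2=E_4,g_3=\chi_{5a},g_4=E_6$ with weights $(k_1,k_2,k_3,k_4)=(2,4,5,6)$, and recall from subsection~\ref{subsec:differential_operator} that $\{f_1,f_2,f_3,f_4\}_*$ is, up to a nonzero constant, $\det\bigl(v(f_1),v(f_2),v(f_3),v(f_4)\bigr)$, where $v(f):=(\operatorname{wt}(f)\cdot f,\ \partial_1 f,\ \partial_2 f,\ \partial_3 f)$ and $\partial_1,\partial_2,\partial_3$ are the partial derivatives in the three coordinates of $\mathbb{H}_2$. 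Two identities drop out of this. Since $\partial_\ell(F^2)=2F\,\partial_\ell F$ and $\operatorname{wt}(F^2)=2\operatorname{wt}(F)$, any modular form $F$ satisfies $v(F^2)=2F\cdot v(F)$, whence
\[ \{F^2,f_2,f_3,f_4\}_*=2F\,\{F,f_2,f_3,f_4\}_*. \]
Secondly, if $Q\in\mathbb{C}[x_1,x_2,x_3,x_4]$ is weighted homogeneous of weight $w$, with $x_m$ of weight $k_m$, then the chain rule together with the weighted Euler identity $\sum_m k_m x_m\partial_{x_m}Q=wQ$ gives $v\bigl(Q(g_1,\dots,g_4)\bigr)=\sum_{m=1}^4(\partial_{x_m}Q)(g_1,\dots,g_4)\,v(g_m)$; expanding $\det\bigl(v(Q(g_1,\dots,g_4)),v(g_2),v(g_3),v(g_4)\bigr)$ along the first column and discarding the terms with a repeated column yields
\[ \{Q(g_1,\dots,g_4),g_2,g_3,g_4\}_*=(\partial_{x_1}Q)(g_1,\dots,g_4)\,\{g_1,g_2,g_3,g_4\}_*, \]
and similarly, up to sign, when any of $g_2,g_3,g_4$ is placed in the first slot.

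Applying this with $F=\chi_{5b}$ and $Q=P$ (so $F^2=P(g_1,\dots,g_4)$), and writing $h:=\{E_2,E_4,\chi_{5a},E_6\}_*$, $\phi_j:=(\partial_{x_j}P)(E_2,E_4,\chi_{5a},E_6)$ and $\widehat h_j:=\{\chi_{5b},g_1,\dots,\widehat{g_j},\dots,g_4\}_*$ (the bracket of $\chi_{5b}$ with the three forms $g_i$, $i\neq j$), the two identities combine into
\[ \phi_j\,h=\pm\,2\,\chi_{5b}\,\widehat h_j\qquad(j=1,2,3,4), \]
in which $\widehat h_j$ is a genuine holomorphic cusp form. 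Hence $\eta:=h/\chi_{5b}$, a priori only a meromorphic modular form of weight $15$, satisfies $\phi_j\,\eta=\pm2\,\widehat h_j\in\bigoplus_k M_k(\Gamma(1,6))$ for every $j$, so the polar divisor of $\eta$ on $\mathbb{H}_2$ is contained in $\bigcap_{j=1}^4 Z(\phi_j)$ (which is itself contained in $Z(\chi_{5b})$, as one sees from the first coordinate $\sum_m k_m g_m\phi_m=w\,\chi_{5b}^2$ of the vector identity for $v(\chi_{5b}^2)$). Thus everything reduces to showing that $\bigcap_j Z(\phi_j)$ has codimension $\geq2$ in $\mathbb{H}_2$: granting this, $\eta$ extends holomorphically across it by the Riemann extension theorem, and then by the Koecher principle (we are in degree two) $\eta$ is a holomorphic modular form of weight $15$, so one may set $\chi_{15}:=\eta$; and $h\not\equiv0$ (by a single Fourier coefficient, or because $E_2,E_4,\chi_{5a},E_6$ are algebraically independent), so $\chi_{15}\neq0$.

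For the codimension bound I would first show that $P$ has no repeated irreducible factor. If $P=C^2P_1$ with $C\in\mathbb{C}[x_1,\dots,x_4]$ non-constant, then $\chi_{5b}/C(E_2,\dots,E_6)$ satisfies $t^2=P_1(E_2,\dots,E_6)$ and so is integral over $\bigoplus_k M_k(\Gamma(1,6))$; since that ring is integrally closed (the Baily--Borel compactification is normal), $\chi_{5b}/C(E_2,\dots,E_6)$ is a modular form of weight $5-\operatorname{wt}(C)$, with $0<\operatorname{wt}(C)\leq5$; but there is no nonzero modular form of weight $1$ or $3$ by Proposition~\ref{prop:lowweight}, no weighted-homogeneous polynomial of weight $3$, and $\operatorname{wt}(C)=5$ would force $\chi_{5b}^2\in\mathbb{C}\,\chi_{5a}^2$, contradicting the explicit formulas of Proposition~\ref{prop:weight5,10} --- so $P$ is squarefree. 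Squarefreeness gives $\gcd(\partial_{x_1}P,\dots,\partial_{x_4}P)=1$ in $\mathbb{C}[x_1,\dots,x_4]$ (use the weighted Euler identity to place $V(\partial_{x_1}P,\dots,\partial_{x_4}P)$ inside $V(P)$, then generic smoothness of the reduced hypersurface $V(P)$), so $V(\partial_{x_1}P,\dots,\partial_{x_4}P)$ has codimension $\geq2$ in $\operatorname{Proj}\mathbb{C}[x_1,\dots,x_4]=\mathbb{P}(2,4,5,6)$; pulling back along the finite morphism $\operatorname{Proj}\bigl(\bigoplus_k M_k(\Gamma(1,6))\bigr)\to\mathbb{P}(2,4,5,6)$ defined by $[E_2:E_4:\chi_{5a}:E_6]$ then yields the claimed codimension bound on $\mathbb{H}_2$. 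The only step that is not formal manipulation is this squarefreeness argument; the one structural input to handle with care is that $\bigoplus_k M_k(\Gamma(1,6))$ is a finite module over $\mathbb{C}[E_2,E_4,\chi_{5a},E_6]$ (needed for the morphism above to be finite), which borders on the main theorem but already follows from the dimension formula of subsection~\ref{subsec:dimension_formula} together with the algebraic independence of $E_2,E_4,\chi_{5a},E_6$.
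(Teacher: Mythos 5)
Your two differential identities are correct, and they in fact contain the paper's argument as the special case $j=3$: since $P=\chi_{5a}^2+Q(E_2,E_4,E_6)$ one has $\partial_{x_3}P=2x_3$, so your relation $\phi_3\,h=\pm2\chi_{5b}\widehat h_3$ is exactly the identity $\chi_{5a}\{E_2,E_4,\chi_{5a},E_6\}_*=\pm\chi_{5b}\{E_2,E_4,\chi_{5b},E_6\}_*$ that the paper derives from $\chi_{5a}^2-\chi_{5b}^2\in\bC[E_2,E_4,E_6]$ and from which it reads off the divisibility directly (the unstated input there being that $\chi_{5a}$ and $\chi_{5b}$ have no common divisor). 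Where you diverge is in replacing that coprimality step by a codimension-two extension argument, and this is where your proposal has a genuine gap: everything hinges on the assertion that $\bigoplus_k M_k(\Gamma(1,6))$ is a finite module over $\bC[E_2,E_4,\chi_{5a},E_6]$, which you claim ``already follows from the dimension formula together with algebraic independence.'' It does not. Matching cubic growth of Hilbert functions for a graded domain containing a weighted polynomial ring never forces module-finiteness; for instance $\bC[x_1,\dots,x_4]\subset\bC[x_1,\dots,x_5]/(x_1x_5-x_2^2)$ is a non-finite extension of graded domains of the same Krull dimension and comparable Hilbert growth. Finiteness here is equivalent to $E_2,E_4,\chi_{5a},E_6$ having no common zero on the Satake compactification, a genuinely geometric fact that is not available at this stage (it is essentially a corollary of the main theorem, so invoking it would be circular). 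Without it the map to $\mathbb{P}(2,4,5,6)$ is only ``Zariski-dominant'' --- its analytic image is $3$-dimensional --- and nothing prevents, a priori, a whole hypersurface of $\fH_2$ from being mapped into the codimension-two locus $V(\partial_{x_1}P,\dots,\partial_{x_4}P)$; so your bound on the polar set of $h/\chi_{5b}$, and with it the Riemann-extension step, is unsupported.

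Two further cautions. First, algebraic independence of $E_2,E_4,\chi_{5a},E_6$ is, in the paper, deduced \emph{from} this very proposition (via Proposition \ref{prop:diffoperator}(ii) and $\chi_{5b}\chi_{15}\neq0$), so you must establish $h=\{E_2,E_4,\chi_{5a},E_6\}_*\neq0$ by the Fourier-coefficient computation you mention, not by citing independence, and likewise you cannot use independence to shore up the finiteness claim. Second, the squarefreeness argument for $P$ is fine in substance (the cases $\mathrm{wt}(C)=2,4$ give forms of weight $3,1$, excluded by Proposition \ref{prop:lowweight}; $\mathrm{wt}(C)=1,3$ admit no weighted-homogeneous polynomial; $\mathrm{wt}(C)=5$ forces $\chi_{5b}\in\bC\chi_{5a}$), and the normality you need can be had directly on $\fH_2$ (a meromorphic function satisfying a monic equation over holomorphic functions is holomorphic) without appealing to the Baily--Borel compactification; but squarefreeness alone does not rescue the argument. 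The cheapest repair is to specialize to $j=3$ as above and prove instead that $\chi_{5a}$ and $\chi_{5b}$ have no common irreducible divisor; as written, the proposal does not yet prove the proposition.
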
 
\noindent 
Fianlly, we will prove Theorem \ref{thm:main} in section \ref{sec:main}. 
We can obtain the generating function of $\mathrm{dim}_{\bC}M_k(\Gamma(1,6))$ 
by using the dimension formula and Proposition \ref{prop:lowweight}. 
It is crucial for the final step to prove the equality. 

The author would like to express his sincere gratitude to Professor Tomoyoshi Ibukiyama for giving him this problem and various advices. 
The author also would like to thank Professor Takashi Sugano and 
Professor Satoshi Wakatsuki for their many helpful advices. 
The author also would like to thank Professor Hiroki Aoki for giving him a various 
suggestion about the proof of Proposition \ref{prop:weight15,20}. 
The author is supported by the Grant-in-Aid for JSPS fellows.

%
\section{Preliminaries} \label{sec:preliminaries} 
%

%
\subsection{Siegel modular forms} \label{subsec:Siegel} 
%

We review Sigel modular forms to fix notation. 
Let $Sp(2;\bR)$ be the real symplectic group of degree two, i.e. 
\[ Sp(2;\bR ) =\left\{ g\in GL(4,\bR )\ \left| \ g\begin{pmatrix} 0_2&1_2\\-1_2&0_2 \end{pmatrix} {}^tg=\begin{pmatrix} 0_2&1_2\\-1_2&0_2 \end{pmatrix} \right. \right\} . \] 
Let $\fH_2$ be the Siegel upper half space of degree two, i.e. 
\[ \fH _2=\{ Z\in M(2;\bC )\ |\ {}^tZ=Z,\ \mathrm{Im}(Z)\mbox{ is positive definite } \} . \] 
The group $Sp(2;\bR)$ acts on $\fH_2$ by 
\[ \gamma \langle Z\rangle :=(AZ+B)(CZ+D)^{-1} \] 
for any $\gamma =\begin{pmatrix} A&B\\C&D \end{pmatrix}\in Sp(2;\bR)$ 
and $Z\in \fH_2$.    
Let $\Gamma$ be a discrete subgroup of $Sp(2;\bR)$ such that vol($\Gamma\backslash \fH_2)<\infty$. 
We say that a holomorphic function $F(Z)$ on $\fH_2$ is a Sigel modular form 
of weight $k$ of $\Gamma$ if it satisfies 
\[ f(\gamma \langle Z\rangle )=\det (CZ+D)^k f(Z), \hspace{5mm} 
\text{for }\forall \gamma =\begin{pmatrix} A&B\\C&D \end{pmatrix} 
\in \Gamma, \forall Z\in \fH_2. \] 
If a Siegel modular form $F(Z)$ satisfies 
\[ \det (\mathrm{Im}(Z)^{1/2})|f(Z)| \text{ is bounded on }\fH_2,  \] 
then we say that $F(Z)$ is a Siegel cusp form. 
We denote by $M_k(\Gamma)$ (resp. $S_k(\Gamma)$) 
the spaces of all Siegel modular forms (resp. cusp forms) of weight $k$ of $\Gamma$. 
It is known that $M_k(\Gamma)$ and $S_k(\Gamma)$ are finite dimensional vector spaces over $\bC$.

%
\subsection{Dimension formula} \label{subsec:dimension_formula} 
%

Let $B$ be an indefinite quaternion algebra over $\bQ$. 
We fix an isomorphism $B\otimes _{\bQ}\bR\simeq M(2;\bR)$  
and we identify $B$ with a subalgebra of $M(2;\bR)$. 
We define $U(2;B)$ and $\Gamma(D_1,D_2)$ as in section \ref{sec:intro}. 
It is known that $U(2;B)\otimes _{\bQ}\bR$ is isomorphic to $Sp(2;\bR)$ by  
\[ \phi : U(2;B)\otimes _{\bQ}\bR \stackrel{\sim}{\longrightarrow}Sp(2;\bR) \] 
\[ \phi (g)=\begin{pmatrix} a_1&a_2&b_2&-b_1\\a_3&a_4&b_4&-b_3\\c_3&c_4&d_4&-d_3\\-c_1&-c_2&-d_2&d_1 \end{pmatrix} ,\quad   
g=\begin{pmatrix} A&B\\C&D \end{pmatrix} \in U(2;B)\otimes _{\bQ}\bR \] 
where $A=\begin{pmatrix} a_1&a_2\\a_3&a_4 \end{pmatrix}$, 
$B=\begin{pmatrix} b_1&b_2\\b_3&b_4 \end{pmatrix}$, 
$C=\begin{pmatrix} c_1&c_2\\c_3&c_4 \end{pmatrix}$, 
$D=\begin{pmatrix} d_1&d_2\\d_3&d_4 \end{pmatrix}\in$\nolinebreak $B$\nolinebreak $\otimes _{\bQ}$\nolinebreak $\bR$, 
and we can identify $\Gamma(D_1,D_2)$ with a discrete subgroup of $Sp(2;\bR)$ such that vol($\Gamma(D_1,D_2)\backslash \fH_2)<\infty$. 
 
In our previous paper \cite{Kit}, we obtained an explicit formula for dimensions of 
the spaces $S_k(\Gamma(D_1,D_2))$ of weight $k\geq 5$ for general $(D_1,D_2)$, 
including the vector-valued case.  
If we apply this formula to $S_k(\Gamma(1,2p))$ for an odd prime number $p$, 
then we have 
\begin{align*} 
\mathrm{dim}_{\bC}S_{k}(\Gamma (1,2p)) &= 
\frac{(k-2)(k-1)(2k-3)}{2^7\cdot 3^2\cdot 5}\cdot (p^2-1) +\frac{1}{2^3\cdot 3}\cdot (p-1)\\ 
 &{\quad } +\frac{(-1)^k(8+\big( \frac{-1}{p}\big) )+(2k-3)(8-\big( \tfrac{-1}{p}\big) )}{2^7\cdot 3}( p-\big( \tfrac{-1}{p}\big) ) \\ 
 &{\quad } +\frac{[0,-1,1;3]_k}{2^2\cdot 3^2} \cdot \Big( 4+\tfrac{1}{2}\big( \tfrac{-3}{p}\big) \big( 1-5\big( \tfrac{-3}{p}\big) \big)\Big) \big( p-\big( \tfrac{-3}{p}\big) \big) \\  
 &{\quad } +\frac{2k-3}{2^2\cdot 3^2} \cdot \Big( 5-\tfrac{1}{2}\big( \tfrac{-3}{p}\big) \big( 1+7\big( \tfrac{-3}{p}\big) \big)\Big) \big( p-\big( \tfrac{-3}{p}\big) \big) \\ 
 &{\quad } -\frac{1}{2^3}( 1-\big( \tfrac{-1}{p}\big) ) 
-\frac{1}{3}( 1-\big( \tfrac{-3}{p}\big) ) \\ 
 &{\quad } +\frac{2\cdot [1,0,0,-1,0;5]_k}{5}\cdot ( 1-\big( \tfrac{p}{5}\big) ) \\ 
 &{\quad } +\frac{[1,0,0,-1;4]_k}{2^2}\cdot \left\{ \begin{array}{ccc} 0&\cdots &\mbox{ if }p\equiv 1,7\mbox{ mod }8 \\ 1&\cdots &\mbox{ if }p\equiv 3,5\mbox{ mod }8 \end{array} \right. \\   
 &{\quad } +\frac{1}{6}\cdot 
\left\{ \begin{array}{ccl} 
        (-1)^k/2&\cdots &\mbox{ if }p=3 \\ 
        0&\cdots &\mbox{ if }p\equiv 1,11\mbox{ mod }12 \\ 
        \left[ 0,1,-1;3\right] _k &\cdots &\mbox{ if }p\equiv 5\mbox{ mod }12 \\ 
        (-1)^k&\cdots &\mbox{ if }p\equiv 7\mbox{ mod }12,  
\end{array} \right. 
\end{align*} 
where $\left( \frac{*}{*}\right)$ is the Legendre symbol and 
$[a_0,\ldots ,a_{m-1};m]_k$ is the function on $k$ 
which takes the value $a_i$ if $k\equiv i\mbox{ mod }m$. 
From this formula, we have $\mathrm{dim}_{\bC}S_k(\Gamma(1,6))$ as follows. 
Our formula is not valid for $k\leq 4$. 
In the following table, we formally substitute $k\leq 4$ in the formula.  

\begin{center} \scalebox{0.9}[1]{ 
\begin{tabular}{c||ccccc|ccccccccccccc} 
$k$&0&1&2&3&4&5&6&7&8&9&10&$\cdots$&15&$\cdots$&20&$\cdots$&25&$\cdots$ \\ \hline 
dim&0&$-1$&0&$-1$&1&2&2&2&3&4&6&$\cdots$&13&$\cdots$&27&$\cdots$&47&$\cdots$  
\end{tabular} } 
\end{center}

%
\subsection{Fourier expansion} \label{subsec:Fourier_expansion} 
%

Let $\fA$ be a maximal two-sided ideal of $\fO$. 
Since the class number of $\fO$ is one, we can write 
$\fA =\fO \pi =\pi \fO$ for some $\pi\in \fO$ such that $|N\pi |=D_1$ 
where $\fA$ corresponds to $(D_1,D_2)$ as in section \ref{sec:intro}.  
We define a three-dimensional $\bQ$ vector space 
$B^{0}:=\{ x\in B\ |\ \mathrm{Tr}(x)=0\}$ and 
define a lattice $\left(\fA ^{-1}\right)^0$ and its dual lattice by 
\begin{align*} 
\left(\fA ^{-1}\right) ^0 := B^{0}\cap \fA ^{-1}, \hspace{5pt} 
{\left(\fA ^{-1}\right) ^0} ^*:=\{ y\in B^0 \ |\ \mathrm{Tr}(xy)\in \bZ \mbox{ for any }x\in\left(\fA ^{-1}\right)^0 \}. 
\end{align*} 
Arakawa proved the following proposition in his master thesis 
\cite[Proposition 10]{Ara75} 
by the same way as, for example, Maa\ss \cite[\S 13]{Maa71}. 

\begin{proposition}[\cite{Ara75},\cite{Hir99}] \label{prop:Fourier} 
Let $\Gamma_{\fA}$ be the discrete subgroup of $Sp(2;\bR)$ defined in section \ref{sec:intro} and $k$ be a positive integer. 
Then $f(Z)\in M_k(\Gamma_{\fA})$ has the following Fourier expansion 
\[ f(Z)=C_f(0)+\sum_{\eta\in {\left( \fA ^{-1}\right)^0}^{*} \atop \eta J>0} C_f(\eta )\mathbf{e}[\mathrm{Tr}(\eta ZJ)], \hspace{10pt} (\mathbf{e}[z]:=e^{2\pi iz}) \] 
where {\small $J=\begin{pmatrix} 0&1\\-1&0 \end{pmatrix}$} and 
$\eta J>0$ means that $\eta J$ is positive definite when we regard $\eta$ as 
an element of $M(2;\bR)$.  
In particular, $f(Z)\in S_k(\Gamma_{\fA})$ is equivalent to $C_f(0)=0$. 
\end{proposition}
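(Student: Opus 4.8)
The plan is to follow the classical method of Maa{\ss} \cite{Maa71}, which Arakawa \cite{Ara75} adapts to this setting: extract a Fourier expansion from the translation subgroup of $\Gamma_{\fA}$, identify the index set by a lattice-duality computation, and then use a Koecher-type argument to cut the support of the coefficients down to $\{0\}$ together with the positive-definite cone. First I would pin down the translations. From the defining relation ${}^t\overline{g}\bigl(\begin{smallmatrix}0&1\\1&0\end{smallmatrix}\bigr)g=\bigl(\begin{smallmatrix}0&1\\1&0\end{smallmatrix}\bigr)$ one sees that $\bigl(\begin{smallmatrix}1&b\\0&1\end{smallmatrix}\bigr)$ lies in $U(2;B)$ exactly when $\mathrm{Tr}(b)=0$, and that it lies in $\Gamma_{\fA}$ exactly when moreover $b\in\fA^{-1}$; so the translations in $\Gamma_{\fA}$ are parametrised by the rank-three lattice $(\fA^{-1})^{0}=B^{0}\cap\fA^{-1}$. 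Transporting through the isomorphism $\phi$ of subsection \ref{subsec:dimension_formula}, each such element acts on $\fH_2$ by $Z\mapsto Z+S(b)$ with $b\mapsto S(b)$ an $\bR$-linear isomorphism of $B^{0}\otimes_{\bQ}\bR$ onto the space of real symmetric $2\times 2$ matrices, so $L:=S\bigl((\fA^{-1})^{0}\bigr)$ is a full lattice.

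Since $f$ is holomorphic on $\fH_2$ and periodic under $Z\mapsto Z+\ell$ for all $\ell\in L$, the usual Fourier-analysis lemma --- expand in $\mathrm{Re}\,Z$ for each fixed $\mathrm{Im}\,Z$, then use holomorphy in each entry of $Z$ to pin down the $\mathrm{Im}\,Z$-dependence of the coefficients --- gives an absolutely convergent expansion $f(Z)=\sum_{\xi\in L^{*}}c(\xi)\,\mathbf{e}[\mathrm{Tr}(\xi Z)]$, summed over the dual lattice $L^{*}$ of $L$ with respect to the trace pairing of symmetric matrices. It then remains to translate this back into quaternionic language: for $\eta\in B^{0}$ the matrix $\eta J$ is symmetric and $\mathrm{Tr}(\eta ZJ)=\mathrm{Tr}\bigl((J\eta)Z\bigr)$, and a routine comparison of $b\mapsto S(b)$ with the reduced trace form $\mathrm{Tr}(xy)$ on $B^{0}$ identifies $L^{*}$ with $\{\eta J:\eta\in{(\fA^{-1})^{0}}^{*}\}$, yielding the asserted expansion indexed by $\eta\in{(\fA^{-1})^{0}}^{*}$.

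The remaining and main point is that $C_f(\eta)=0$ unless $\eta=0$ or $\eta J>0$, which I would obtain by the Koecher-type argument of \cite[\S13]{Maa71}. Beyond the translations, $\Gamma_{\fA}$ contains the diagonal elements $\mathrm{diag}(u,\overline{u}^{-1})$ with $u\in\fO^{\times}$, and more generally the $\Gamma_{\fA}$-points of the Levi of a proper $\bQ$-parabolic of $U(2;B)$; these act on $\fH_2$ by $Z\mapsto AZ\,{}^{t}A$ and permute the index lattice ${(\fA^{-1})^{0}}^{*}$, so the automorphy relation makes $|C_f(\eta)|$ constant along the corresponding orbits, which are infinite because $B$ is indefinite. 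Confronting this with the boundedness of $\det(\mathrm{Im}\,Z)^{1/2}|f(Z)|$-type quantities on Siegel domains forces the vanishing of $C_f(\eta)$ whenever $\eta J$ is not positive definite --- in particular whenever $\eta\neq0$ has vanishing reduced norm. This is exactly where the hypothesis $D\neq1$ enters conceptually: $U(2;B)$ then has $\bQ$-rank one with a zero-dimensional boundary component, so no ``rank-one'' Fourier terms occur, only $C_f(0)$ and the full-rank terms. The final assertion $S_k(\Gamma_{\fA})=\{f:C_f(0)=0\}$ is then immediate, since once the constant term is removed all surviving terms decay exponentially as $\mathrm{Im}\,Z\to\infty$.

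I expect the genuine obstacle to be this last step: carrying out the Koecher estimate over $B$ and checking that the $\Gamma_{\fA}$-orbit of every degenerate index is large enough to annihilate its coefficient. The earlier steps are linear algebra over $B$ together with the standard Fourier lemma, which I would verify explicitly but not dwell on.
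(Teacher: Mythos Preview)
The paper does not supply its own proof of this proposition: it merely records that Arakawa proved it in his master's thesis \cite{Ara75} ``by the same way as, for example, Maa{\ss} \cite[\S13]{Maa71}''. Your outline is exactly that Maa{\ss}-style argument --- extract the expansion from the unipotent translations $\bigl(\begin{smallmatrix}1&b\\0&1\end{smallmatrix}\bigr)$ with $b\in(\fA^{-1})^{0}$, identify the index lattice via trace duality, and kill the non-admissible coefficients by a Koecher argument using the infinite unit group $\fO^{\times}$ --- so there is nothing to contrast.

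One small remark on your Koecher step: you phrase it in terms of ``boundedness of $\det(\mathrm{Im}\,Z)^{1/2}|f(Z)|$-type quantities'', but that is what one \emph{concludes}, not what one inputs. The actual mechanism is that absolute convergence of the Fourier series at a single point $Z_{0}$ forces each individual term to be bounded; since $|C_{f}(\eta)|$ is constant along the (infinite) $\fO^{\times}$-orbit of $\eta$ while the exponential factor can be driven to infinity when $\eta J$ has a negative eigenvalue, the coefficient must vanish. The intermediate semi-definite case $\eta\neq0$, $\det(\eta J)=0$ is vacuous here precisely because $B$ is a division algebra over $\bQ$ (your point about $D\neq1$), so no separate orbit argument is needed for it.
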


%
\subsection{Eisenstein series} \label{subsec:Eisenstein_series} 
%

By applying the method of Shimura \cite{Shi83}, 
Hirai \cite{Hir99} studied the Eisenstein series $E_k$ ($k\geq 2$: even) 
on $\Gamma(D_1,D_2)$ 
and obtained an explicit formula of Fourier coefficients of it. 
(Proposition \ref{prop:HiraiEisen} below). 
We define 
\[ {\left( \fA^{-1}\right)^0}^*_{\mathrm{prim}} := \big\{ \eta\in{\left( \fA^{-1}\right)^0}^* \ \big| \ n^{-1}\eta\not\in {\left( \fA^{-1}\right)^0}^* \text{ for any integer }n \big\}. \]  
For $\eta\in{\left( \fA^{-1}\right)^0}^*$, we denote by $d_{\eta}$ and $\chi_{\eta}$ 
the discriminant and the Dirichlet character of $\bQ(\eta)/\bQ$ 
and denote by $B_m$ (resp. $B_{m,\chi_{\eta}}$) the $m$-th Bernoulli (resp. the generalized Bernoulli) number. 
We define positive integers $a_{\eta}$ and $f_{\eta}$ by 
\[ a_{\eta}^{-1}\eta\in{\left( \fA^{-1}\right)^0}^*_{\mathrm{prim}},\quad 
(2a_{\eta}^{-1}\eta )^2=d_{\eta} {f_{\eta}}^2.  \] 
We put $a_{\eta,p}=\mathrm{ord}_p(a_{\eta})$, $f_{\eta,p}=\mathrm{ord}_p(f_{\eta})$. 
Then the following proposition holds. 
\begin{proposition}[\cite{Hir99} Theorem 3.10] \label{prop:HiraiEisen} 
Let $k$ be an even positive integer. Then the Eisenstein series $E_k$ has the 
following Fourier expansion. 
\[ E_k(Z)=1+\sum _{\eta\in {\left(\fA ^{-1}\right)^0}^* \atop \eta J>0} C(\eta ) \mathbf{e}[\mathrm{Tr}(\eta Z)], \] 
where \\ 
\resizebox{0.9\textwidth}{!}{
$\displaystyle C(\eta )=\frac{4kB_{k-1,\chi_{\eta}}}{B_kB_{2k-2}} 
\prod_{p|D_1}\frac{(1-\chi_{\eta}(p)p^{k-1})(1-\chi_{\eta}(p)p^{k-2})}{p^{2k-2}-1} 
\prod_{p|D_2}\frac{1}{p^{k-1}-1} \prod _p F_p(\eta ,k)$,} \\ 
\resizebox{\textwidth}{!}{
$F_p(\eta ,k)= \begin{cases} 
\displaystyle \sum _{t=0}^{a_{\eta,p}} p^{(2k-3)t}+(1+\chi_{\eta}(p))\sum _{t=0}^{a_{\eta,p}-1}p^{(2k-3)t+k-1} 
\hspace{10mm} \cdots \text{ if }p|D_1, \\ 
\displaystyle \sum _{t=0}^{a_{\eta,p}} p^{(2k-3)t}-\chi_{\eta}(p)\sum _{t=0}^{a_{\eta,p}-1}p^{(2k-3)t+k-2} 
\hspace{20mm} \cdots \text{ if }p|D_2, \\ 
\displaystyle \sum _{t=0}^{a_{\eta,p}} \left\{ \sum _{l=0}^{a_{\eta,p}+f_{\eta,p}-t}p^{(2k-3)l+(k-1)t}-\chi_{\eta}(p)
\sum _{l=0}^{a_{\eta,p}+f_{\eta,p}-t-1}p^{(2k-3)l+(k-1)t+k-2} \right\} \\ 
\hspace{80mm} \cdots \text{ if }p\not| D. 
\end{cases}$} 
\end{proposition}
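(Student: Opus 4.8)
The plan is to realise $E_k$ as the Siegel Eisenstein series attached to the stabiliser of the point cusp and to compute its Fourier coefficients by Shimura's method \cite{Shi83}, reducing the global coefficient $C(\eta)$ to a finite product of local densities. The quaternion hermitian form $\begin{pmatrix} 0 & 1 \\ 1 & 0 \end{pmatrix}$ has Witt index one, so the Siegel parabolic $P\subset U(2;B)$ stabilising the isotropic line has unipotent radical $\{\,\begin{pmatrix} 1 & b \\ 0 & 1 \end{pmatrix} : b\in B^0\,\}\cong B^0$, the additive group of trace-zero quaternions; this is exactly the group along which the expansion of Proposition \ref{prop:Fourier} is taken, which is why the coefficients are indexed by the dual lattice ${(\fA^{-1})^0}^*$. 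Writing $\Gamma_\infty=\Gamma_{\fA}\cap P$ and letting $C_\gamma,D_\gamma$ denote the lower blocks of the image of $\gamma$ in $Sp(2;\bR)$ under the isomorphism $\phi$, I would set $E_k(Z)=\sum_{\gamma\in\Gamma_\infty\backslash\Gamma_{\fA}}\det(C_\gamma Z+D_\gamma)^{-k}$; since the class number of $\fO$ is one there is a single point cusp, and the normalisation forces the constant term to be $1$.

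To extract the coefficient of $\mathbf{e}[\mathrm{Tr}(\eta Z)]$ I would sort the cosets by the rank of $C_\gamma$ and unfold in the classical manner of Maa\ss\ \cite{Maa71} and Shimura \cite{Shi83}; only the top-rank terms contribute to a given $\eta$ with $\eta J>0$. The resulting coefficient then factors, through the local structure of $\fO$ and $\fA$, as an archimedean factor times an Euler product $\prod_p F_p(\eta,k)$ of local representation densities of $\eta$ by the hermitian form. At the archimedean place the local integral is Shimura's confluent hypergeometric function, which for a scalar holomorphic weight $k$ and positive-definite argument degenerates to an explicit elementary value; combining this factor with the product over unramified primes and invoking the analytic continuation and functional equation of $\zeta(s)$ and $L(s,\chi_\eta)$ at the point prescribed by $k$ produces the global prefactor $\tfrac{4kB_{k-1,\chi_\eta}}{B_kB_{2k-2}}$ together with the two finite correction products over $p\mid D_1$ and $p\mid D_2$.

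The heart of the proof is the evaluation of $F_p(\eta,k)$, which I would carry out by a three-way case distinction according to the behaviour of $B$ at $p$. If $p\nmid D$ then $B\otimes_{\bQ}\bQ_p\simeq M(2;\bQ_p)$ and $U(2;B)_p\simeq Sp(2;\bQ_p)$, so $F_p$ is the classical Siegel series of the split symplectic group; evaluating it on the lattice $(\fA^{-1})^0\otimes\bZ_p$ in terms of the local invariants $a_{\eta,p}$ and $f_{\eta,p}$ (where $a_\eta$ measures imprimitivity and $f_\eta$ is read from $(2a_\eta^{-1}\eta)^2=d_\eta f_\eta^2$) yields the double-sum formula. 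If $p\mid D$ then $B\otimes_{\bQ}\bQ_p$ is the local division quaternion algebra; here $\fA$ is locally either $\fO_p$ (when $p\mid D_1$) or $\pi\fO_p$ (when $p\mid D_2$), and the two choices of level alter the set of relevant lattices and hence the count, producing the first and second single-sum formulas respectively. In each case the local density is a finite geometric sum obtained by counting lattice points modulo powers of $p$, and I would confirm it against the appropriate local functional equation to fix the exponents and the factor $\chi_\eta(p)$.

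The main obstacle is precisely this local computation at the ramified primes $p\mid D$, where $B\otimes_{\bQ}\bQ_p$ is a division algebra and the lattice theory of the rank-two quaternion hermitian space is markedly more delicate than in the split case: one must enumerate the orbits of sublattices under the local unitary group and track how the level $\fO_p$ versus $\pi\fO_p$ changes the recursion for the local density. Matching these densities to the closed-form sums in the statement, and in particular isolating the shift by $k-1$ versus $k-2$ that distinguishes the two ramified cases, is the combinatorial crux; by contrast the unramified case and the global assembly into Bernoulli numbers follow comparatively routinely from Shimura's general machinery \cite{Shi83}.
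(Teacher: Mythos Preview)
The paper does not prove this proposition at all: it is quoted verbatim from Hirai \cite{Hir99}, Theorem~3.10, and used as a black box to compute Fourier coefficients of the $E_k$. There is therefore no proof in the paper to compare your proposal against.

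That said, your outline is a faithful sketch of the argument Hirai actually gives in \cite{Hir99}, which in turn follows Shimura \cite{Shi83}: define $E_k$ as the series over $\Gamma_\infty\backslash\Gamma_{\fA}$, unfold to reduce the $\eta$-th coefficient to a product of local singular series, evaluate the archimedean piece via Shimura's confluent hypergeometric integral, and compute the $p$-adic densities case by case according to whether $p\nmid D$, $p\mid D_1$, or $p\mid D_2$. Your identification of the ramified local computation as the crux is accurate; Hirai carries this out by an explicit orbit decomposition for the local quaternion hermitian lattice. If you intend to supply a self-contained proof here rather than a citation, you would need to actually execute those local density calculations rather than describe their shape, but as a roadmap your proposal matches the source.
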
   

We see from Proposition \ref{prop:Fourier} that 
\begin{align*}  
M_k(\Gamma (D_1,D_2)) &= S_k(\Gamma (D_1,D_2)) & &\text{ if }k\text{ is odd, and} \\ 
M_k(\Gamma (D_1,D_2)) &= S_k(\Gamma (D_1,D_2)) \oplus \bC E_k & &\text{ if }k\text{ is even}.   
\end{align*}

%
\subsection{Rankin-Cohen type differential operators} 
\label{subsec:differential_operator} 
%

We quote the following proposition from Aoki and Ibukiyama\cite{AokIbu05}.\footnote{In \cite{AokIbu05}, Proposition \ref{prop:diffoperator} has been proved in the case of general degree $n$.} 
For $Z\in H_2$, we write the $(i,j)$ component of $Z$ by $z_{ij}$. 
For Siegel modular forms $f_i\in M_{k_i}(\Gamma)$ 
of weight $k_i$ ($1\leq i\leq 4$), 
we define a new function $\{ f_1,f_2,f_3,f_4\}_{*}$ by 
\[ \{ f_1,f_2,f_3,f_4\}_{*} 
= \begin{vmatrix} 
k_1f_1&k_2f_2&k_3f_3 &k_4f_4 \\[3pt]  
\tfrac{\partial f_1}{\partial z_{11}}&\tfrac{\partial f_2}{\partial z_{11}}&\tfrac{\partial f_3}{\partial z_{11}}&\tfrac{\partial f_4}{\partial z_{11}} \\[5pt] 
\tfrac{\partial f_1}{\partial z_{12}}&\tfrac{\partial f_2}{\partial z_{12}}&\tfrac{\partial f_3}{\partial z_{12}}&\tfrac{\partial f_4}{\partial z_{12}} \\[5pt] 
\tfrac{\partial f_1}{\partial z_{22}}&\tfrac{\partial f_2}{\partial z_{22}}&\tfrac{\partial f_3}{\partial z_{22}}&\tfrac{\partial f_4}{\partial z_{22}} 
\end{vmatrix}. \] 
\begin{proposition}[Aoki and Ibukiyama \cite{AokIbu05}]  
\label{prop:diffoperator} 
$(i)$\ The above function $\{ f_1,f_2,f_3,f_4\}_{*}$ is a 
Siegel cusp form of weight $k_1+k_2+k_3+k_4+3$. \\ 
$(ii)$\ $f_1,f_2,f_3,f_4$ are algebraiclly independent 
if and only if $\{ f_1,f_2,f_3,f_4\}_{*}\not= 0$. 
\end{proposition}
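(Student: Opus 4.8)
The plan is to prove the two parts by the standard arguments for Rankin--Cohen type differential operators (as in \cite{AokIbu05}), specialized to degree two. For part $(i)$ I would first compute the covariance of the column vector $V_i(Z):={}^t\!\bigl(k_if_i,\ \partial f_i/\partial z_{11},\ \partial f_i/\partial z_{12},\ \partial f_i/\partial z_{22}\bigr)$ under $\gamma=\begin{pmatrix}A&B\\C&D\end{pmatrix}\in\Gamma$. Differentiating the automorphy relation $f_i(\gamma\langle Z\rangle)=\det(CZ+D)^{k_i}f_i(Z)$ in the $z_{pq}$, and using the classical identities $d(\gamma\langle Z\rangle)={}^t(CZ+D)^{-1}\,dZ\,(CZ+D)^{-1}$ and the formula for $\partial\det(CZ+D)/\partial z_{pq}$, one gets $V_i(\gamma\langle Z\rangle)=\det(CZ+D)^{k_i}P(\gamma,Z)V_i(Z)$ with a single matrix $P(\gamma,Z)$ independent of $i$, block lower triangular with diagonal blocks $1$ and ${}^t\Sigma(\gamma,Z)^{-1}$, where $\Sigma(\gamma,Z)$ is the matrix of $S\mapsto{}^t(CZ+D)^{-1}S(CZ+D)^{-1}$ on symmetric $2\times2$ matrices; since that map has determinant $\det(CZ+D)^{-3}$ we get $\det P(\gamma,Z)=\det(CZ+D)^{3}$, and hence $\{f_1,f_2,f_3,f_4\}_*$ satisfies the automorphy relation of weight $k_1+k_2+k_3+k_4+3$. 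Holomorphy being clear, it is a modular form. That it is a cusp form follows because every entry in rows $2,3,4$ of the defining determinant is a derivative of an $f_i$: applying the Siegel $\Phi$-operator (which is multiplicative on the relevant Fourier series) and using $\Phi(\partial f/\partial z_{12})=\Phi(\partial f/\partial z_{22})=0$, every summand of the Leibniz expansion of the determinant is killed by $\Phi$; for $\Gamma=\Gamma_{\fA}$ one sees it even more directly, since each such summand contains three factors supported in the open cone $\{\eta J>0\}$ of Proposition \ref{prop:Fourier}, so its constant Fourier coefficient vanishes.

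For part $(ii)$ I would homogenize: on the four-dimensional manifold $\fH_2\times\bC^{\times}$ attach to a weight-$k$ form $f$ the honest holomorphic function $\widehat f(Z,t):=t^{k}f(Z)$. The assignment $f\mapsto\widehat f$ is an injective graded $\bC$-algebra homomorphism from $\bigoplus_k M_k(\Gamma)$ into the meromorphic functions on $\fH_2\times\bC^{\times}$, and dually it is a holomorphic map $\fH_2\times\bC^{\times}\to C^{\mathrm{an}}$ with Zariski-dense image onto (the analytification of) the affine cone $C=\operatorname{Spec}\bigl(\bigoplus_k M_k(\Gamma)\bigr)$, a $\bC$-variety with $\dim C=1+\dim(\Gamma\backslash\fH_2)=4$ (finite generation of the ring of modular forms being classical here). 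A computation parallel to $(i)$ gives $\partial(\widehat f_1,\widehat f_2,\widehat f_3,\widehat f_4)/\partial(z_{11},z_{12},z_{22},t)=\pm\,t^{\,k_1+k_2+k_3+k_4-1}\{f_1,f_2,f_3,f_4\}_*$, so $\{f_1,f_2,f_3,f_4\}_*\not\equiv0$ is equivalent to $d\widehat f_1\wedge d\widehat f_2\wedge d\widehat f_3\wedge d\widehat f_4\neq0$, and, pulling back along the dense map, to the nonvanishing of the rational $4$-form $df_1\wedge df_2\wedge df_3\wedge df_4$ on $C$. As $C$ is a variety over $\bC$ of the same dimension as $\mathbb A^4$, the Jacobian criterion for a dominant morphism of varieties of equal dimension in characteristic zero turns the latter into the dominance of $(f_1,f_2,f_3,f_4)\colon C\to\mathbb A^4$, i.e.\ into the algebraic independence of $f_1,f_2,f_3,f_4$ over $\bC$.

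I would also include the elementary proof of the implication ``$\{f_1,f_2,f_3,f_4\}_*\neq0\Rightarrow$ independence'', both as a check and because it needs no geometry: if $P\neq0$ is a weighted-homogeneous polynomial of \emph{minimal} degree with $P(f_1,f_2,f_3,f_4)\equiv0$, then each $\partial P/\partial X_i$ has strictly smaller degree, so by minimality and $\operatorname{char}\bC=0$ the forms $Q_i:=(\partial P/\partial X_i)(f_1,f_2,f_3,f_4)$ are not all identically zero; differentiating $P(f_1,f_2,f_3,f_4)\equiv0$ in the $z_{pq}$ and using Euler's identity $\sum_i k_iX_i\,\partial P/\partial X_i=(\deg P)P$ then shows that ${}^t(Q_1,Q_2,Q_3,Q_4)\neq0$ lies in the kernel of the matrix whose columns are the $V_i$, forcing $\{f_1,f_2,f_3,f_4\}_*\equiv0$.

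The hard part is $(ii)$. Part $(i)$ is essentially a bookkeeping exercise once the covariance of $V_i$ is written down; the only real subtlety is tracking the $\det(CZ+D)$-corrections carefully enough that $P(\gamma,Z)$ comes out with determinant exactly $\det(CZ+D)^{3}$. In $(ii)$ the crux is that the right ambient object is the four-dimensional cone $C$, not $\fH_2$ itself (on which every $4$-form vanishes), and that one must stay inside the algebraic category, so that a nonvanishing $df_1\wedge df_2\wedge df_3\wedge df_4$ genuinely records \emph{algebraic}, not merely analytic, independence; making the identification of $\{f_1,f_2,f_3,f_4\}_*$ with the Jacobian of an actual morphism precise, and justifying the passage between the two descriptions of $C$, is where the real work sits.
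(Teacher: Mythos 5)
The paper itself gives no proof of this proposition: it is quoted verbatim from Aoki--Ibukiyama \cite{AokIbu05} (where it is proved for general degree), so there is no in-paper argument to compare yours against; I can only judge the proposal on its own terms. Your part $(i)$ is the standard covariance computation and is correct: the vector $V_i$ transforms by $\det(CZ+D)^{k_i}P(\gamma,Z)$ with $P$ independent of $i$ (this is exactly why the first row carries the factor $k_i$), and since $S\mapsto{}^t(CZ+D)^{-1}S(CZ+D)^{-1}$ on symmetric $2\times2$ matrices has determinant $\det(CZ+D)^{-3}$, the determinant picks up weight $k_1+k_2+k_3+k_4+3$. Your cuspidality argument via Proposition \ref{prop:Fourier} is the right one for $\Gamma_{\fA}$: every Leibniz term contains three derivative factors, whose Fourier supports lie in $\{\eta J>0\}$, so the constant coefficient of the product vanishes. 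The elementary half of $(ii)$ (nonvanishing implies independence), via a minimal-degree isobaric relation, its partial derivatives, and the Euler identity, is also complete and correct.

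The one genuine soft spot is the hard direction of $(ii)$, and you have located it yourself but not closed it. From ``the map $\fH_2\times\bC^{\times}\to C^{\mathrm{an}}$ has Zariski-dense image'' you cannot yet conclude that the pullback of the generically nonvanishing rational $4$-form $df_1\wedge df_2\wedge df_3\wedge df_4$ is nonzero: a holomorphic map of generic rank $3$ from a $4$-dimensional source can perfectly well have Zariski-dense image, and then every pulled-back $4$-form dies. What you actually need is the stronger analytic statement that the image contains a nonempty open subset of $C^{\mathrm{an}}$ (equivalently, that the uniformizing map to the cone has generic rank $4$). This is precisely the Baily--Borel input: $\operatorname{Proj}\bigl(\bigoplus_k M_k(\Gamma)\bigr)$ is the Satake compactification and $\Gamma\backslash\fH_2$ sits inside it as an analytically open dense subvariety, so the cone map hits an open set and a Sard-type argument forces generic rank $4$; the same theorem is also what underwrites your unproved assertions that the graded ring is finitely generated and that $\dim C=1+\dim(\Gamma\backslash\fH_2)=4$ for this non-split, point-cusp group. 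With Baily--Borel invoked explicitly at that step, your chain (independence $\Leftrightarrow$ dominance of $C\to\mathbb{A}^4$ $\Rightarrow$ generic smoothness in characteristic zero $\Rightarrow$ nonvanishing $4$-form $\Rightarrow$ nonvanishing Jacobian of the $\widehat f_i$ $\Rightarrow$ $\{f_1,f_2,f_3,f_4\}_*\neq0$, using your identity with the factor $t^{k_1+k_2+k_3+k_4-1}$) does close, and is in the same spirit as the transcendence-degree argument of \cite{AokIbu05}.
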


%
\section{Proof of Proposition \ref{prop:lowweight}} \label{sec:lowweight} 
%

In this section, we will prove Proposition \ref{prop:lowweight},  
that is, we will determine the spaces of weight $k\leq 4$. 
Note that the dimension formula is not valid for weight $k\leq 4$. 
(See subsection \ref{subsec:dimension_formula}). 

We prepare to calculate Fourier coefficients. 
If we put 
\[ B:=\bQ +\bQ a +\bQ b+\bQ ab, \hspace{10mm} a^2=6, b^2=5, ab=-ba, \] 
then $B$ is an indefinite quaternion algebra over $\bQ$ of discriminant $6$, 
which is unique up to isomorphism. 
Let $\fO$ be the maximal order of $B$, which is unique up to conjugacy. 
It is known by Ibukiyama \cite{Ibu72},\cite{Ibu82} that 
$\fO$ can be taken as 
\[ \fO =\bZ + \bZ \frac{1+b}{2}+\bZ \frac{a(1+b)}{2}+\bZ \frac{(1+a)b}{5}. \] 
If we put $\fA =a\fO$, then $\fA$ is the unique maximal two-sided ideal 
corresponding to $(1,6)$. 
By a straightforward calculation, we obtain 
\[ {\left(\fA ^{-1}\right)^0}^* =\bZ\frac{5a+b+ab}{10}+\bZ\frac{b}{2}+\bZ a. \] 
For $\eta =x(5a+b+ab)/60+yb/12+za/6\in{\left(\fA^{-1}\right)^0}^*$, 
we denote it by $\eta =[x,y,z]$ and 
we can see from a direct calculation that 
the condition $\eta J>0$ is equivalent to 
\begin{center} 
$\begin{cases} 
x>0, \hspace{3mm} \text{and} \\ 
m_{\eta}:=-(5x^2+5y^2+24z^2-2xy+24zx)>0.  
\end{cases}$ 
\end{center} 

We have the following modular forms which are obtained as 
products of Eisenstein series $E_k$'s: 
\begin{align*} 
\mathrm{weight}\ 2 &: E_2, & 
\mathrm{weight}\ 4 &: {E_2}^2, E_4, \\ 
\mathrm{weight}\ 6 &: {E_2}^3, E_2E_4, E_6, & 
\mathrm{weight}\ 8 &: {E_2}^4, {E_2}^2E_4, E_2E_6, {E_4}^2, E_8.  
\end{align*} 
For the sake of simplicity of Fourier coefficients, 
we use the following $\varphi _k$ instead of $E_k$ ($k=2,4,6,8$):  
\begin{align*} 
\varphi _2 &= E_2, & 
\varphi _4 &= -\tfrac{13}{288}\cdot (E_4-{\varphi_2}^2), \\ 
\varphi _6 &= -\tfrac{341}{113184}\cdot (E_6-{\varphi_2}^3)-\tfrac{109}{262}\cdot \varphi_2\varphi _4, & 
\varphi _8 &= 138811E_8, 
\end{align*} 
then we have Fourier coefficients of them as in the following tables.  

\vspace{2mm} 
\begin{center} 
\resizebox{0.8\textwidth}{!}{ 
\begin{tabular}{|c|c||c||c|c||c|c|c|} \hline \label{tab:weight2,4,6} 
$m_{\eta}$&$\eta$&$\varphi _2$&${\varphi _2}^2$&$\varphi _4$&${\varphi _2}^3$&$\varphi _2\varphi _4$&$\varphi _6$ \\ \hline\hline 
0&[0, 0, 0]&1&1&0&1&0&0 \\ \hline
3&[2, 1, $-$1]&48&96&1&144&1&0 \\ \hline
4&[2, 0, $-$1]&72&144&$-$1&216&$-$1&1 \\ \hline
12&[4, 2, $-$2]&192&2688&$-$2&7488&46&$-$6 \\ \hline
16&[4, 0, $-$2]&216&5616&6&16200&$-$66&42 \\ \hline
19&[4, 1, $-$2]&144&7200&$-$5&21168&19&$-$16 \\ \hline
24&[5, 1, $-$2]&288&15552&12&45792&$-$36&$-$60 \\ \hline
27&[6, 3, $-$3]&192&18816&36&166464&132&96 \\ \hline
36&[6, 0, $-$3]&360&41040&$-$45&495288&363&21 \\ \hline
40&[6, 2, $-$3]&288&52416&$-$4&654048&$-$580&100 \\ \hline
\end{tabular}  
} 
\end{center} 

\vspace{2mm} 
\begin{center} 
\resizebox{0.8\textwidth}{!}{ 
\begin{tabular}{|c|c||c|c|c|c|c|} \hline \label{tab:weight8} 
$m_{\eta}$&$\eta$&${\varphi _2}^4$&${\varphi _2}^2\varphi _4$&$\varphi _2\varphi _6$&${\varphi _4}^2$&$\varphi _8$ \\ \hline\hline 
0&[0, 0, 0]&1&0&0&0&138811 \\ \hline
3&[2, 1, $-$1]&192&1&0&0&13440 \\ \hline
4&[2, 0, $-$1]&288&$-$1&1&0&87840 \\ \hline
12&[4, 2, $-$2]&14592&94&$-$6&1&110974080 \\ \hline
16&[4, 0, $-$2]&31968&$-$138&114&1&719673120 \\ \hline
19&[4, 1, $-$2]&42048&43&32&$-$2&2181836160 \\ \hline
24&[5, 1, $-$2]&91008&$-$84&84&4&10043268480 \\ \hline
27&[6, 3, $-$3]&553728&2532&$-$192&$-$4&21427714560 \\ \hline
36&[6, 0, $-$3]&1736352&$-$4413&3261&$-$8&140109455520 \\ \hline
40&[6, 2, $-$3]&2302848&3452&$-$764&$-$4&277771616640 \\ \hline
43&[6, 1, $-$3]&3318336&$-$1613&992&18&441018218880 \\ \hline
48&[8, 4, $-$4]&11137152&16012&3396&82&909100536960 \\ \hline
51&[7, 2, $-$4]&7825536&3318&$-$4992&40&1337603408640 \\ \hline
52&[7, 1, $-$4]&9062784&$-$5116&$-$2180&$-$56&1528671231360 \\ \hline
64&[8, 0, $-$4]&49322592&12476&9556&90&5895562286880 \\ \hline
\end{tabular} 
} 
\end{center} 

\vspace{2mm} 
From these tables and the results of the dimension formula, 
we can see the following: 
\begin{align*} 
M_2(\Gamma (1,6)) &\supseteq \bC E_2, \hspace{10mm} 
M_4(\Gamma (1,6)) \supseteq \bC {E_2}^2\oplus \bC E_4, \\ 
M_6(\Gamma (1,6)) &= \bC {E_2}^3 \oplus \bC E_2E_4 \oplus \bC E_6,   \\ 
M_8(\Gamma (1,6)) &= \bC {E_2}^4 \oplus \bC {E_2}^2E_4 \oplus \bC E_2E_6 \oplus \bC {E_4}^2.  \\ 
 &\hspace{-10mm} \left( E_8=\tfrac{48860325}{18184241}{E_2}^4-\tfrac{107719950}{18184241}{E_2}^2E_4 
+\tfrac{26257000}{18184241}E_2E_6+\tfrac{387686}{138811}{E_4}^2 \right) 
\end{align*} 

We can prove Proposition \ref{prop:lowweight} 
by using the spaces of weight $6$ and $8$. 
We prove the following lemma.  
\begin{lemma} \label{lem:no_cusp_forms} 
If $k(\not= 6)$ is a positive divisor of $6$, 
then there are no non-zero cusp forms of weight $k$. 
\end{lemma}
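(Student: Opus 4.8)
The plan is to establish the three equalities $S_{1}(\Gamma(1,6))=S_{2}(\Gamma(1,6))=S_{3}(\Gamma(1,6))=\{0\}$ (these being the positive divisors of $6$ other than $6$ itself) by one uniform argument. Given a hypothetical non-zero cusp form $f$ of weight $k\in\{1,2,3\}$, I would raise it to the power $r:=6/k$, which is an integer $\ge 2$, so that $f^{r}$ lands in $S_{6}(\Gamma(1,6))$ --- a space we already control completely --- and then derive a contradiction from the position of the Fourier support of $f^{r}$. The point is that a \emph{proper} power ($r\ge 2$) of a cusp form has Fourier expansion supported far from the origin, whereas a weight-$6$ cusp form here is determined by its very first Fourier coefficients.

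In detail, suppose $0\ne f\in S_{k}(\Gamma(1,6))$ with $k\in\{1,2,3\}$, and set $r:=6/k\ge 2$, $g:=f^{r}$. Then $g$ is holomorphic on $\fH_{2}$, transforms with weight $rk=6$ under $\Gamma(1,6)$, and has $C_{g}(0)=C_{f}(0)^{r}=0$ (the constant term of a product equals the product of the constant terms, since every $\eta$ in the support of a modular form has $\eta J$ positive semidefinite and positive semidefinite matrices summing to $0$ all vanish); hence $g\in S_{6}(\Gamma(1,6))$ by Proposition~\ref{prop:Fourier}, and $g\ne 0$ since the ring of holomorphic functions on the connected domain $\fH_{2}$ is an integral domain. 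Now, by the computations preceding this lemma in Section~\ref{sec:lowweight}, $M_{6}(\Gamma(1,6))=\bC E_{2}^{3}\oplus\bC E_{2}E_{4}\oplus\bC E_{6}$ is $3$-dimensional, so $S_{6}(\Gamma(1,6))$ is the $2$-dimensional subspace of forms with vanishing constant term, namely $S_{6}(\Gamma(1,6))=\bC\varphi_{2}\varphi_{4}\oplus\bC\varphi_{6}$. Reading off the weight-$6$ table, the linear map $h\mapsto\bigl(C_{h}([2,1,-1]),\,C_{h}([2,0,-1])\bigr)$ sends $\varphi_{2}\varphi_{4}\mapsto(1,-1)$ and $\varphi_{6}\mapsto(0,1)$, so it is an isomorphism $S_{6}(\Gamma(1,6))\xrightarrow{\ \sim\ }\bC^{2}$. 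Thus it suffices to show $C_{g}([2,1,-1])=C_{g}([2,0,-1])=0$.

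For this last step I would use the explicit arithmetic of Section~\ref{subsec:Fourier_expansion}. Since $g=f^{r}$, every $\eta$ occurring in the Fourier expansion of $g$ has the form $\eta=\eta_{1}+\dots+\eta_{r}$ with each $\eta_{i}$ occurring for $f$, hence with $\eta_{i}J>0$ (the term $\eta_{i}=0$ does not occur, $f$ being a cusp form). A direct check from the description of the cone --- using that $\eta J>0$ is equivalent to $x>0$ and $m_{\eta}>0$, and that for $x=1$ the inequality $m_{\eta}>0$ reads $5y^{2}-2y+24z^{2}+24z+5<0$, which has no solution in the relevant lattice --- shows that every $\eta$ with $\eta J>0$ has first coordinate $x\ge 2$. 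Since the first coordinate is additive, a sum $\eta_{1}+\dots+\eta_{r}$ with $r\ge 2$ has first coordinate $\ge 2r\ge 4$, so it is neither $[2,1,-1]$ nor $[2,0,-1]$ (both of which have $x=2$). Hence $C_{g}([2,1,-1])=C_{g}([2,0,-1])=0$, so $g=0$, a contradiction; this proves $S_{k}(\Gamma(1,6))=\{0\}$ for $k=1,2,3$, which is Lemma~\ref{lem:no_cusp_forms}. (Once $S_{2}(\Gamma(1,6))=\{0\}$ is in hand, the case $k=1$ also follows from $f\mapsto f^{2}$.) The only step requiring genuine care is the combinatorial claim that the two Fourier-minimal vectors $[2,1,-1]$ and $[2,0,-1]$ are indecomposable in the additive monoid $\{\eta\in{\left(\fA^{-1}\right)^{0}}^{*}:\eta J>0\}$; everything else is a formal consequence of Proposition~\ref{prop:Fourier} and the explicit weight-$6$ data already recorded.
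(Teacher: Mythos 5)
Your proof is correct and follows essentially the same route as the paper: raise the hypothetical cusp form to the power $6/k$, observe that the resulting weight-$6$ cusp form has vanishing Fourier coefficients at $[0,0,0]$, $[2,1,-1]$, $[2,0,-1]$, and conclude it is zero from the explicit weight-$6$ table. The only difference is cosmetic: you spell out the indecomposability of the two minimal vectors (every support vector has $x\ge 2$), which the paper uses implicitly when writing the convolution formulas for $C_{f^2}$.
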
 

\begin{proof} 
We assume that there is a non-zero cusp form $f$ of weight $k$. 
Then the Fourier coefficients of $f^2\in S_{2k}(\Gamma (1,6))$ are: 
\begin{align*} 
C_{f^2}(0,0,0) &= C_{f}(0,0,0)\cdot C_f(0,0,0) =0, \\ 
C_{f^2}(2,1,-1) &= 2\cdot C_{f}(0,0,0)\cdot C_f(2,1,-1)=0, \\ 
C_{f^2}(2,0,-1) &= 2\cdot C_{f}(0,0,0)\cdot C_f(2,0,-1)=0,   
\end{align*} 
so the Fourier coefficients of $f^{6/k}\in S_6(\Gamma(1,6))$ are also 
\[ C_{f^{6/k}}(0,0,0)=C_{f^{6/k}}(2,1,-1)=C_{f^{6/k}}(2,0,-1)=0. \]  
Hence we have $f^{6/k}=0$ because of the table of Fourier coefficients of 
the space of weight $6$ on page \pageref{tab:weight2,4,6}, 
but this contradicts the assumption that $f$ is not zero.  
\end{proof} 

\noindent 
\textbf{Proof of Proposition \ref{prop:lowweight}.} 
Noting that modular forms of odd weight are necessarily cusp forms, 
we see that $M_1(\Gamma(1,6))=M_3(\Gamma (1,6))=\{ 0\}$ 
by Lemma \ref{lem:no_cusp_forms}. 
Also we see that $M_2(\Gamma (1,6))=\bC E_2$ by Lemma \ref{lem:no_cusp_forms} 
because if there is a non-zero element $f$ of $M_2(\Gamma(1,6))$ 
which is linearly independent of $E_2$, 
then we can assume that $f$ is a cusp form by adjusting it by $E_2$. 

Next, we prove $M_4(\Gamma (1,6))=\bC {E_2}^2 \oplus \bC E_4$.  
We assume that there is a non-zero element $f\in M_4(\Gamma(1,6))$ 
which is linearly independent of ${E_2}^2$ and $E_4$. 
Then we can assume that $C_f(0,0,0)=C_f(2,1,-1)=0$ 
by adjusting them by ${E_2}^2$ and $E_4$ (cf. the table on page \pageref{tab:weight2,4,6}).  
Then the Fourier coefficients of $f^2\in S_8(\Gamma(1,6))$ are 
\begin{align*} 
C_{f^2}(0,0,0) &= C_{f}(0,0,0)\cdot C_f(0,0,0) =0, \\ 
C_{f^2}(2,1,-1) &= 2\cdot C_{f}(0,0,0)\cdot C_f(2,1,-1)=0, \\ 
C_{f^2}(2,0,-1) &= 2\cdot C_{f}(0,0,0)\cdot C_f(2,0,-1)=0, \\ 
C_{f^2}(4,2,-2) &= 2\cdot C_{f}(0,0,0)\cdot C_f(4,2,-2)+C_f(2,1,-1)^2=0. 
\end{align*} 
Hence we have $f^2=0$, and therefore $f=0$. 
This contradicts the assumption. 
\begin{flushright} $\square$ \end{flushright}

%
\section{Proof of Proposition \ref{prop:weight5,10}} 
\label{sec:weight5,10}    
%

In this section, we will prove Propositin \ref{prop:weight5,10}, that is, 
we will determine the spaces of weight $5$ and $10$. 
By the dimension formulla, we have $\mathrm{dim}_{\bC}M_5(\Gamma(1,6))=2$ and 
$\mathrm{dim}_{\bC}M_{10}(\Gamma(1,6))=7$.  
We can obtain a $6$-dimensional subspace $V$ of $M_{10}(\Gamma(1,6))$ 
by products of Eisenstein series $E_k$'s: 
\begin{align*} 
V =\bC {E_2}^5 \oplus \bC {E_2}^3E_4 \oplus \bC {E_2}^2E_6  
  \oplus \bC E_2{E_4}^2 \oplus \bC E_4E_6 \oplus \bC E_{10}.  
\end{align*}  
We define $\varphi_2$, $\varphi_4$ and $\varphi_6$ as in section \ref{sec:lowweight} 
and define $\varphi_{10}$ by  
\begin{align*} 
\varphi _{10} &= 
\tfrac{31513745731}{416023384089600}\cdot (E_{10}-{\varphi_2}^5)
+\tfrac{52522796831}{2889051278400}\cdot {\varphi_2}^3\varphi _4 \\ 
 &{\hspace{5mm}} +\tfrac{21884309761}{481508546400}\cdot {\varphi_2}^2\varphi _6
-\tfrac{829232949}{1671904675}\cdot \varphi_2{\varphi _4}^2
+\tfrac{318067693}{1671904675}\cdot \varphi _4\varphi _6. 
\end{align*}  
Then we have Fourier coefficients of them as in the following table. 

\vspace{5mm} 
\begin{center} 
\begin{longtable}{|c|c||c|c|c|c|c|c|} \hline 
$m_{\eta}$&$\eta$&${\varphi _2}^5$&${\varphi _2}^3\varphi _4$&${\varphi _2}^2\varphi _6$&$\varphi _2{\varphi _4}^2$&$\varphi _4\varphi _6$&$\varphi _{10}$ \\ \hline\hline 
0&[0, 0, 0]&1&0&0&0&0&0 \\ \hline
3&[2, 1, $-$1]&240&1&0&0&0&0 \\ \hline
4&[2, 0, $-$1]&360&$-$1&1&0&0&0 \\ \hline
12&[4, 2, $-$2]&24000&142&$-$6&1&0&0 \\ \hline
16&[4, 0, $-$2]&52920&$-$210&186&1&$-$1&0 \\ \hline
19&[4, 1, $-$2]&69840&67&80&$-$2&1&1 \\ \hline
24&[5, 1, $-$2]&151200&$-$132&228&4&$-$2&$-$4 \\ \hline
27&[6, 3, $-$3]&1291200&7236&$-$480&44&$-$6&$-$6 \\ \hline
36&[6, 0, $-$3]&4137480&$-$14373&11685&64&$-$36&$-$24 \\ \hline
40&[6, 2, $-$3]&5496480&12092&676&$-$28&$-$14&$-$12 \\ \hline
43&[6, 1, $-$3]&8018640&$-$5021&9008&$-$78&55&23 \\ \hline
48&[8, 4, $-$4]&40679520&155356&$-$6540&82&102&96 \\ \hline
51&[7, 2, $-$4]&19124640&10182&3648&$-$8&$-$44&20 \\ \hline
52&[7, 1, $-$4]&22154400&$-$17188&15652&112&4&$-$8 \\ \hline
64&[8, 0, $-$4]&189615960&$-$326884&275764&$-$654&118&320 \\ \hline 
\end{longtable} 
\end{center} 

\begin{lemma} \label{lem:weight5,10} 
For a non-zero element $f\in M_5(\Gamma(1,6))$, 
there is a non-zero element $\chi _f \in V$ such that 
$\chi _f$ is divisible by $f$ (i.e. the function $\chi _f/f$ is holomorphic). 
\end{lemma}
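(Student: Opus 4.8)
The plan is to prove the lemma by a dimension count inside $M_{10}(\Gamma(1,6))$, using only facts already available. By the dimension formula of subsection~\ref{subsec:dimension_formula} we have $\dim_{\bC}M_5(\Gamma(1,6))=2$ and $\dim_{\bC}M_{10}(\Gamma(1,6))=7$. Moreover $V$ is $6$-dimensional: the six forms $\varphi_2^5,\varphi_2^3\varphi_4,\varphi_2^2\varphi_6,\varphi_2\varphi_4^2,\varphi_4\varphi_6,\varphi_{10}$ all lie in $V$, and they are linearly independent because their Fourier coefficients at the indices $[0,0,0],[2,1,-1],[2,0,-1],[4,2,-2],[4,0,-2],[4,1,-2]$ recorded in the table above form a triangular matrix with nonzero diagonal; since $V$ is spanned by six elements, these forms are in fact a basis of $V$.

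Next I would note that for a nonzero $f\in M_5(\Gamma(1,6))$ the multiplication map $g\mapsto fg$ from $M_5(\Gamma(1,6))$ to $M_{10}(\Gamma(1,6))$ is injective: the graded ring $\bigoplus_k M_k(\Gamma(1,6))$ is an integral domain, since a modular form is a holomorphic function on the connected domain $\fH_2$ and the product of two forms that are not identically zero is not identically zero. Hence $fM_5(\Gamma(1,6))$ is a $2$-dimensional subspace of $M_{10}(\Gamma(1,6))$ --- in fact of $S_{10}(\Gamma(1,6))$, being a product of two cusp forms of positive weight, although that refinement is not needed.

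Now $V$ and $fM_5(\Gamma(1,6))$ are subspaces of the $7$-dimensional space $M_{10}(\Gamma(1,6))$, of dimensions $6$ and $2$, so
\[
\dim_{\bC}\bigl(V\cap fM_5(\Gamma(1,6))\bigr)\;\ge\;6+2-7\;=\;1 .
\]
Choosing any nonzero $\chi_f$ in this intersection and writing $\chi_f=fg$ with $g\in M_5(\Gamma(1,6))$, the quotient $\chi_f/f=g$ is holomorphic on $\fH_2$, which is exactly the assertion. I do not expect a serious obstacle here: the argument is purely formal once the three dimensions $2,6,7$ are in hand, and the only item requiring hands-on verification is $\dim_{\bC}V=6$ (the triangular Fourier table), the values $\dim M_5=2$ and $\dim M_{10}=7$ being furnished by the dimension formula already quoted.
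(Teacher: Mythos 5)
Your proof is correct and rests on exactly the same ingredients as the paper's: $\dim M_5=2$, $\dim V=6$, $\dim M_{10}=7$, and the fact that multiplication by a nonzero form is injective (the graded ring is a domain). The paper reaches the same conclusion by an explicit case analysis (whether $f^2\in V$, then whether $fg\in V$, writing $fg=x+r f^2$ otherwise), which is just a hands-on version of your inequality $\dim\bigl(V\cap fM_5(\Gamma(1,6))\bigr)\ge 6+2-7=1$, so this is essentially the same argument in a cleaner linear-algebra packaging.
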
 

\begin{proof} 
We can take some $g\in M_5(\Gamma(1,6))$ such that $M_5(\Gamma(1,6))=\bC f\oplus \bC g$. 
We have either $f^2\in V$ or $f^2\not\in V$. 
If $f^2\in V$, Lemma \ref{lem:weight5,10} holds for $\chi _f=f^2$. 
Hereafter we assume $f^2\not\in V$. 
Then we have $M_{10}(\Gamma(1,6))=V\oplus \bC f^2$. 
We have either $fg\in V$ or $fg \not\in V$. 
If $fg\in V$, then Lemma \ref{lem:weight5,10} holds for $\chi _f =fg$. 
If $fg\not\in V$, we can write $fg=x+r\cdot f^2$
for some $x\in V$ and some $r\in \bC ^{\times}$. 
Hence we have $V\ni x=fg-r\cdot f^2=f(g-r\cdot f)$ and $x\not= 0$. 
We see that Lemma \ref{lem:weight5,10} holds for $\chi_f=x$. 
\end{proof} 

\begin{lemma} \label{lem:weight5} 
We can find a basis $\chi_{5a}$, $\chi_{5b}$ of $M_5(\Gamma(1,6))$ which satisfy the following conditions: 
\begin{align*} 
C_{\chi_{5a}}(0,0,0) &= 0, & C_{\chi_{5a}}(2,1,-1) &= 0, & C_{\chi_{5a}}(2,0,-1) &= 1, \\ 
C_{\chi_{5b}}(0,0,0) &= 0, & C_{\chi_{5b}}(2,1,-1) &= 1, & C_{\chi_{5b}}(2,0,-1) &= 0.  
\end{align*} 
\end{lemma}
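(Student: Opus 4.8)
\textbf{Proof proposal for Lemma \ref{lem:weight5}.}
The plan is to produce the desired basis by elementary linear algebra once we know enough about the map $M_5(\Gamma(1,6))\to\bC^3$ sending $f$ to the triple of its first three Fourier coefficients $(C_f(0,0,0),C_f(2,1,-1),C_f(2,0,-1))$. Since every element of $M_5$ is a cusp form (odd weight), we automatically have $C_f(0,0,0)=0$, so this map really lands in the two-dimensional space spanned by the coordinates indexed by $[2,1,-1]$ and $[2,0,-1]$. As $\mathrm{dim}_{\bC}M_5(\Gamma(1,6))=2$, it suffices to show this linear map $M_5(\Gamma(1,6))\to\bC^2$, $f\mapsto (C_f(2,1,-1),C_f(2,0,-1))$, is injective; then it is an isomorphism, and we take $\chi_{5a}$, $\chi_{5b}$ to be the preimages of the standard basis vectors $(0,1)$ and $(1,0)$, which gives exactly the normalization in the statement.

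The injectivity is the crux, and it is where I would invoke Lemma \ref{lem:no_cusp_forms} and the weight-$6$ calculation in the same spirit as the proof of Proposition \ref{prop:lowweight}. Suppose $f\in M_5(\Gamma(1,6))$ is nonzero with $C_f(2,1,-1)=C_f(2,0,-1)=0$ (and $C_f(0,0,0)=0$ automatically). Then $f^2\in S_{10}(\Gamma(1,6))$, and by the product formula for Fourier coefficients together with $C_f(0,0,0)=0$ we get $C_{f^2}(0,0,0)=C_{f^2}(2,1,-1)=C_{f^2}(2,0,-1)=0$; moreover, because $f$ has no Fourier support at $[2,1,-1]$ or $[2,0,-1]$, the coefficient $C_{f^2}(4,2,-2)=2C_f(0,0,0)C_f(4,2,-2)+C_f(2,1,-1)^2$ also vanishes, and similarly several further low coefficients vanish. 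Rather than push $f^2$ directly, it is cleaner to form $f^6\in S_{30}(\Gamma(1,6))$: its lowest three Fourier coefficients vanish, so by the same argument used to prove Lemma \ref{lem:no_cusp_forms} it is forced to be $0$, hence $f=0$, a contradiction. (Alternatively one feeds $f^2$ through the weight-$8$ table exactly as in the weight-$4$ part of Proposition \ref{prop:lowweight}.)

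The main obstacle I anticipate is not conceptual but bookkeeping: one must be sure that the vanishing of the first few Fourier coefficients of a cusp form of weight $6$ (or whatever even weight one reduces to) genuinely forces the form to be zero — i.e. that the listed coefficients on page \pageref{tab:weight2,4,6} separate the three-dimensional space $M_6(\Gamma(1,6))$ — and that the multiplicativity of Fourier coefficients under the twisted exponential $\mathbf{e}[\mathrm{Tr}(\eta ZJ)]$ is used correctly (the index set ${\left(\fA^{-1}\right)^0}^*$ is additive, so products behave as expected). Once injectivity is in hand, defining $\chi_{5a}$ and $\chi_{5b}$ as the appropriate preimages and reading off the three normalizing coefficients is immediate, and these will be the forms referenced in Proposition \ref{prop:weight5,10}.
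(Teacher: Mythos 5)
Your overall frame is sound: reducing the lemma to injectivity of the linear map $M_5(\Gamma(1,6))\to\bC^2$, $f\mapsto (C_f(2,1,-1),C_f(2,0,-1))$, and taking preimages of the standard basis vectors is essentially equivalent to the basis manipulation in the paper's proof. The gap is in how you prove injectivity. Your main route, passing to $f^6\in S_{30}(\Gamma(1,6))$ and invoking "the same argument as Lemma \ref{lem:no_cusp_forms}", does not work: that argument is special to weight $6$, where the dimension formula gives $\dim M_6=3$ and the three Eisenstein products ${E_2}^3, E_2E_4, E_6$ are known to span the whole space with a Fourier table whose columns at $[0,0,0]$, $[2,1,-1]$, $[2,0,-1]$ separate it, so three vanishing coefficients force the form to be zero. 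In weight $30$ the space has large dimension, no such spanning set or table is available, and three vanishing Fourier coefficients force nothing. Your parenthetical alternative, feeding $f^2$ "through the weight-$8$ table", is a weight mismatch ($f^2$ has weight $10$); and even using the correct weight-$10$ table, that table only describes the $6$-dimensional subspace $V$ spanned by products of Eisenstein series, whereas $\dim M_{10}=7$, so $f^2$ need not lie in $V$ and the vanishing of its tabulated coefficients does not imply $f^2=0$.

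This is precisely the difficulty that Lemma \ref{lem:weight5,10} is designed to overcome, and it is the missing ingredient in your argument. For a nonzero $f\in M_5(\Gamma(1,6))$ that lemma produces a nonzero $\chi_f\in V$ divisible by $f$, so $\chi_f=f\cdot f'$ with $f'\in M_5(\Gamma(1,6))$. Under your hypothesis $C_f(0,0,0)=C_f(2,1,-1)=C_f(2,0,-1)=0$, the product formula shows that \emph{all six} tabulated coefficients of $\chi_f$ (at $m_\eta=0,3,4,12,16,19$) vanish — six are needed, not three, since $\dim V=6$ — and the weight-$10$ table then gives $\chi_f=0$, a contradiction. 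With that insertion your proof closes and becomes, in substance, the paper's proof, which runs the same computation for $h=f(rf+sg)\in V\setminus\{0\}$ to rule out the case where both basis elements vanish at $[2,1,-1]$ (and then the case $\beta=0$), before normalizing to obtain $\chi_{5a}$ and $\chi_{5b}$.
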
 

\begin{proof} 
Let $f,g$ be a basis of $M_5(\Gamma(1,6))$. 
We see from Lemma \ref{lem:weight5,10} that we can take $r$, $s\in\bC$ 
so that $f(rf+sg)\in V-\{ 0\}$. 
We put Fourier coefficients of them as  
\begin{align*} 
C_f(2,1,-1) &= \alpha , & C_f(2,0,-1) &= \beta , \\ 
C_g(2,1,-1) &= \gamma , & C_g(2,0,-1) &= \delta 
\end{align*} 
We assume $\alpha =\gamma =0$. 
Then Fourier coefficients of $h:=f(rf+sg)$ are as follows: 
\begin{align*} 
C_h(0,0,0) &= C_{f}(0,0,0)\cdot C_{f'}(0,0,0) = 0, \\ 
C_h(2,1,-1) &= C_{f}(0,0,0)\cdot C_{f'}(2,1,-1)+C_{f}(2,1,-1)\cdot C_{f'}(0,0,0) = 0, \\ 
C_h(2,0,-1) &= C_{f}(0,0,0)\cdot C_{f'}(2,0,-1)+C_{f}(2,0,-1)\cdot C_{f'}(0,0,0) = 0, \\ 
C_h(4,2,-2) &= C_{f}(0,0,0)\cdot C_{f'}(4,2,-2)+C_{f}(4,2,-2)\cdot C_{f'}(0,0,0) \\ 
 &\hspace{5mm} +C_{f}(2,1,-1)\cdot C_{f'}(2,1,-1)= 0, \\ 
C_h(4,0,-2) &= C_{f}(0,0,0)\cdot C_{f'}(4,0,-2)+C_{f}(4,0,-2)\cdot C_{f'}(0,0,0) \\ 
 &\hspace{5mm} +C_{f}(2,0,-1)\cdot C_{f'}(2,0,-1)= 0, \\ 
C_h(4,1,-2) &= C_{f}(0,0,0)\cdot C_{f'}(4,1,-2)+C_{f}(4,1,-2)\cdot C_{f'}(0,0,0) \\ 
 &\hspace{5mm} +C_{f}(2,0,-1)\cdot C_{f'}(2,1,-1)+C_{f}(2,1,-1)\cdot C_{f'}(2,0,-1)= 0, 
\end{align*} 
where $f':=rf+sg$.   
Hence we have $h=0$ because of the table of Fourier coefficients of the space of 
weight $10$. 
This contradicts the above. 
Hereafter we assume that either $\alpha$ or $\gamma$ is non-zero. 
We can assume that $\alpha=0$ and $\gamma=1$. 
If $\beta=0$, then the Fourier coefficients of $h$ satisfy the same condition as above.  
So we have $\beta\not=0$. 
We can assume $\beta =1$ and $\delta =0$. 
\end{proof} 

\noindent 
\textbf{Proof of Proposition \ref{prop:weight5,10}.} 
We take a basis $\chi_{5a}$ and $\chi_{5b}$ which satisfy the condition of 
Lemme \ref{lem:weight5}. 
Then we can verify that Fourier coefficients are as follows: 
{\small 
\begin{align*} 
C_{\chi_{5a}}(0,0,0) &= 0, & C_{\chi_{5a}}(2,1,-1) &= 0, & C_{\chi_{5a}}(2,0,-1) &= 1, \\ 
C_{\chi_{5a}}(3,0,-2) &= 0, & C_{\chi_{5a}}(3,0,-1) &= 0, & C_{\chi_{5a}}(3,1,-2) &= -1, & C_{\chi_{5a}}(3,1,-1) &= -1, \\ 
C_{\chi_{5b}}(0,0,0) &= 0, & C_{\chi_{5b}}(2,1,-1) &= 1, & C_{\chi_{5b}}(2,0,-1) &= 0. \\  
C_{\chi_{5b}}(3,0,-2) &= -1, & C_{\chi_{5b}}(3,0,-1) &= -1, & C_{\chi_{5b}}(3,1,-2) &= 0, & C_{\chi_{5b}}(3,1,-1) &= 0. 
\end{align*}}  
By Lemma \ref{lem:weight5,10}, 
we have $f:=\chi_{5a}(\alpha \chi_{5a}+\beta \chi_{5b})\in V$ for 
some $\alpha$, $\beta\in \bC$. 
Fourier coefficients of $f$ are 
\[ C_f(0,0,0)=C_f(2,1,-1)=C_f(2,0,-1)=C_f(4,2,-2)=0, \] 
\[ C_f(4,0,-2)=\alpha,\hspace{5pt} C_f(4,1,-2)=\beta \] 
by the same calculation as in the proof of Lemma \ref{lem:weight5}.  
We can see from the table of Fourier coefficients of the space of weight $10$ 
that 
$f= -\alpha\varphi_4\varphi_6 + (\alpha +\beta )\varphi_{10}$ and 
$C_f(5,1,-2)=-2\alpha -4\beta$. 
On the other hand, we have 
\begin{align*} 
C_f(5,1,-2) &= C_{\chi_{5a}}(0,0,0)\cdot C_{f'}(5,1,-2)+C_{\chi_{5a}}(5,1,-2)\cdot C_{f'}(0,0,0) \\ 
 &\hspace{5mm} +C_{\chi_{5a}}(2,0,-1)\cdot C_{f'}(3,1,-1)+C_{\chi_{5a}}(3,1,-1)\cdot C_{f'}(2,0,-1) \\ 
 &\hspace{5mm} +C_{\chi_{5a}}(2,1,-1)\cdot C_{f'}(3,0,-1)+C_{\chi_{5a}}(3,0,-1)\cdot C_{f'}(2,1,-1) \\ 
 &= -2\alpha ,
\end{align*}  
where $f'=\alpha\chi_{5a}+\beta\chi_{5b}$.  
Hence we have $\beta =0$, and therefore 
we can assume $f={\chi_{5a}}^2$ and 
\begin{align*} 
f &= \varphi _{10} -\varphi _4\varphi _6 \\ 
  &= \tfrac{31513745731}{416023384089600}E_{10} -\tfrac{126433528597}{311423218947072}{E_2}^5+\tfrac{11304517601}{14285468759040}{E_2}^3E_4 \\ 
&{\hspace{5mm}} -\tfrac{41742579637}{1557116094735360}{E_2}^2E_6-\tfrac{38947571}{120147846816}E_2{E_4}^2-\tfrac{1000259890201}{9083177219289600}E_4E_6.   
\end{align*} 
If $\chi_{5a}\chi_{5b}\in V$, then we have $\chi_{5a}(\chi_{5a}+\chi_{5b})\in V$ 
and this contradicts the above argument. 
Hence $\chi_{5a}\chi_{5b}\not\in V$ and $M_{10}(\Gamma(1,6))=V\oplus \bC \chi_{5a}\chi_{5b}$.  
We put ${\chi_{5b}}^2=v+r\chi_{5a}\chi_{5b}$ for some $v\in V$ and $r\in \bC$. 
Then $v=\chi_{5b}(\chi_{5b}-r\chi_{5a})$ and 
\[ C_f(0,0,0)=C_f(2,1,-1)=C_f(2,0,-1)=C_f(4,0,-2)=0, \] 
\[ C_f(4,2,-2)=1,\hspace{5pt} C_f(4,1,-2)=-r \] 
by the same calculation as above. 
Hence we have $v=\varphi_2{\varphi_4}^2+\varphi_4\varphi_6+(-r+1)\varphi_{10}$ and 
$C_v(5,1,-2)=-4r-2$. 
On the other hand,  
we have 
\begin{align*} 
C_v(5,1,-2) &= C_{\chi_{5b}}(0,0,0)\cdot C_{v'}(5,1,-2)+C_{\chi_{5b}}(5,1,-2)\cdot C_{v'}(0,0,0) \\ 
 &\hspace{5mm} +C_{\chi_{5b}}(2,0,-1)\cdot C_{v'}(3,1,-1)+C_{\chi_{5b}}(3,1,-1)\cdot C_{v'}(2,0,-1) \\ 
 &\hspace{5mm} +C_{\chi_{5b}}(2,1,-1)\cdot C_{v'}(3,0,-1)+C_{\chi_{5b}}(3,0,-1)\cdot C_{v'}(2,1,-1) \\ 
 &= -2 ,
\end{align*}  
where $v'=\chi_{5b}+r\chi_{5a}$.  
Hence we have $r=0$, and therefore $v={\chi_{5b}}^2$ and 
\begin{align*} 
{\chi_{5b}}^2 &= \varphi_2{\varphi_4}^2+\varphi_4\varphi_6+\varphi_{10} \\ 
&= \tfrac{31513745731}{416023384089600}E_{10} +\tfrac{266799861}{1281577032704}{E_2}^5-\tfrac{261925781}{1587274306560}{E_2}^3E_4 \\ 
 &{\hspace{5mm}} -\tfrac{1914649869}{6407885163520}{E_2}^2E_6+\tfrac{935053847}{51903869824512}E_2{E_4}^2+\tfrac{551346719209}{3406191457233600}E_4E_6.  
\end{align*}   
\begin{flushright} $\square$ \end{flushright}

%
\section{Proof of Proposition \ref{prop:weight15,20}} 
\label{sec:weight15,20} 
%

In this section, we will prove Proposition \ref{prop:weight15,20}, that is, 
we will determine the spaces of weight $15$ and $20$. 
By the result of the dimension formula,  
we have $\mathrm{dim}_{\bC}M_{20}(\Gamma(1,6))=28$. 
We can verify that the subspace $V$ of $M_{20}(\Gamma(1,6))$ spanned by 
all products of $E_2$, $E_4$, $\chi_{5a}$, $\chi_{5b}$ and $E_6$ is of dimension $26$. 
If we put $\delta _{20a}:=\{ E_2,E_4,\chi_{5a},E_6\}_*$ and 
$\delta _{20b}:=\{ E_2,E_4,\chi_{5b},E_6\}_*$, 
then we can verify that the complementary space of $V$ in $M_{20}(\Gamma(1,6))$ 
is spanned by $\delta_{20a}$ and $\delta_{20b}$ by calculating Fourier coefficients 
of them. 

By Proposition \ref{prop:weight5,10}, 
we see that $E_2$, $E_4$, $E_6$ and ${\chi_{5a}}^2-{\chi_{5b}}^2$ 
are algebraically dependent over $\bC$, 
so we have $\{ E_2,E_4,E_6,{\chi_{5a}}^2-{\chi_{5b}}^2\}_* =0$. 
By an elementary property of the differential calculus, 
we have 
\begin{align*} 
\{ E_2,E_4,{\chi_{5a}}^2,E_6\} &= 2\cdot \chi_{5a}\cdot \{ E_2,E_4,\chi_{5a},E_6\}_* \\ 
\rotatebox[origin=c]{90}{$=$} \hspace{13mm} & \\ 
\{ E_2,E_4,{\chi_{5b}}^2,E_6\} &= 2\cdot \chi_{5b}\cdot \{ E_2,E_4,\chi_{5b},E_6\}_* .
\end{align*} 
Hence we see that there is a cusp form $\chi_{15}$ such that 
$\{ E_2,E_4,\chi_{5a},E_6\}_* =\chi_{5b}\chi_{15}$ and 
$\{ E_2,E_4,\chi_{5b},E_6\}_* =\chi_{5a}\chi_{15}$. 

By the result of the dimension formula,  
we have $\mathrm{dim}_{\bC}M_{15}(\Gamma(1,6))=13$. 
We see that the subspace $U$ of $M_{15}(\Gamma(1,6))$ spanned by 
all products of $E_2$, $E_4$, $\chi_{5a}$, $\chi_{5b}$ and $E_6$ is of dimension $12$. 
If $\chi_{15}\in U$, then we see that $\delta_{20a}=\chi_{5b}\chi_{15}\in V$,  
but this is not the case. 
Hence we see that  $M_{15}(\Gamma(1,6))=U\oplus \bC \chi_{15}$. 
\begin{flushright} $\square$ \end{flushright}

%
\section{Proof of Theorem \ref{thm:main}} 
\label{sec:main} 
%

In this section, we will prove Theorem \ref{thm:main}. 
First, we calculate the generating function of $\mathrm{dim}_{\bC}M_k(\Gamma(1,6))$. 
From the dimension formula in subsection \ref{subsec:dimension_formula} and 
Proposition \ref{prop:lowweight}, 
we see that 
\begin{align*} 
\sum_{k=0}^{\infty} \mathrm{dim}_{\bC}M_k(\Gamma (1,6))t^k  
&= 1+t^2+2t^4+\sum_{k=5}^{\infty} \mathrm{dim}_{\bC}S_k(\Gamma (1,6))t^k 
+\sum_{k=3}^{\infty} t^{2k} \\ 
&= \frac{(1+t^5)(1+t^{15})}{(1-t^2)(1-t^4)(1-t^5)(1-t^6)}.  
\end{align*} 
By the results of the previous sections, we have obtained  
\begin{align} \label{eq:sec:main_1} 
\bigoplus _{k=0}^{\infty} M_k(\Gamma (1,6)) \supseteq 
\bC [E_2,E_4,\chi_{5a},\chi_{5b},E_6,\chi_{15}]. 
\end{align} 
We do not mean that six modular forms in the right side of (\ref{eq:sec:main_1}) 
are algebraically independent over $\bC$. 
We need to determine the precise structure of the right side of (\ref{eq:sec:main_1}).  
\begin{lemma} \label{lem:proofmain} \ \\ 
$(i)$\ $E_2$, $E_4$, $\chi_{5a}$ and $E_6$ are algebraically independent over $\bC$. \\ 
$(ii)$\ ${\chi_{5b}}^2$, ${\chi_{15}}^2\in \bC [E_2,E_4,\chi_{5a},E_6]$. \\ 
$(iii)$\ $1$ and $\chi_{5b}$ are linearly independent over $\bC[E_2,E_4,\chi_{5a},E_6]$. \\ 
$(iv)$\ $1$ and $\chi_{15}$ are linearly independent over $\bC [E_2,E_4,\chi_{5a},\chi_{5b},E_6]$. 
\end{lemma}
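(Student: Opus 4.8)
\textbf{Proof proposal for Lemma \ref{lem:proofmain}.}

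The plan is to derive all four parts from the generating-function identity and the structural facts already established in Sections \ref{sec:lowweight}--\ref{sec:weight15,20}. First I would prove $(i)$. By Proposition \ref{prop:diffoperator}$(ii)$, the four forms $E_2,E_4,\chi_{5a},E_6$ are algebraically independent over $\bC$ precisely when $\{E_2,E_4,\chi_{5a},E_6\}_*\neq 0$; but this Siegel cusp form of weight $20$ equals $\chi_{5b}\chi_{15}$ by Proposition \ref{prop:weight15,20}, and $\chi_{5b}\neq 0$, $\chi_{15}\neq 0$ since $\chi_{5b}$ is a nonzero element of the basis of $M_5(\Gamma(1,6))$ from Lemma \ref{lem:weight5} and $\chi_{15}$ spans a genuine complementary line in $M_{15}(\Gamma(1,6))$ by Section \ref{sec:weight15,20}. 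Hence the product is nonzero and $(i)$ follows. For $(ii)$, the relation for ${\chi_{5b}}^2$ is literally the second displayed formula in Proposition \ref{prop:weight5,10}, which expresses it as a polynomial in $E_2,E_4,E_6$ together with $E_{10}$; but $E_{10}\in M_{10}(\Gamma(1,6))$ and by Section \ref{sec:weight5,10} we have $M_{10}(\Gamma(1,6))=V\oplus\bC\chi_{5a}\chi_{5b}$ with $V\subseteq\bC[E_2,E_4,E_6]$, so I must also check that the $\chi_{5a}\chi_{5b}$-component of ${\chi_{5b}}^2$ vanishes, which is exactly the content of the computation (the coefficient $r$ was shown to be $0$). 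Actually the cleanest route is to observe that the stated identity already exhibits ${\chi_{5b}}^2$ as a polynomial in $E_2,E_4,E_6$ alone, hence a fortiori in $\bC[E_2,E_4,\chi_{5a},E_6]$; similarly ${\chi_{5a}}^2$ is such a polynomial. For ${\chi_{15}}^2$ I would use ${\chi_{15}}^2\cdot{\chi_{5b}}^2 = (\chi_{5b}\chi_{15})^2 = \{E_2,E_4,\chi_{5a},E_6\}_*^2$; the right side is a polynomial in $E_2,E_4,\chi_{5a},E_6$ and their derivatives, and since the Rankin--Cohen bracket output lies in $M_{40}(\Gamma(1,6))$ one expresses it in the graded ring; dividing by the known polynomial ${\chi_{5b}}^2$ inside the polynomial ring $\bC[E_2,E_4,\chi_{5a},E_6]$ (a UFD, by $(i)$) and checking divisibility gives ${\chi_{15}}^2\in\bC[E_2,E_4,\chi_{5a},E_6]$. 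Alternatively, and more in the spirit of the paper, one verifies directly that $M_{30}(\Gamma(1,6))$ is spanned by products of $E_2,E_4,\chi_{5a},\chi_{5b},E_6$ together with $\chi_{15}$ times such products, forces the $\chi_{15}$-component of ${\chi_{15}}^2$ to be zero by a Fourier-coefficient computation as in Section \ref{sec:weight15,20}, and reads off the polynomial expression; I would present whichever is shorter once the Fourier data in the Appendix are in hand.

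For $(iii)$, suppose $P + Q\chi_{5b}=0$ with $P,Q\in\bC[E_2,E_4,\chi_{5a},E_6]$ not both zero. Each graded piece must vanish separately, so I may assume $P,Q$ are homogeneous with $\deg P = \deg Q + 5$. If $Q=0$ then $P=0$, contradicting $(i)$; if $Q\neq 0$, then $\chi_{5b}=-P/Q$ would be a quotient of polynomials in $E_2,E_4,\chi_{5a},E_6$, hence algebraic over $\bC(E_2,E_4,\chi_{5a},E_6)$, in fact lying in that field. Squaring and using $(ii)$, ${\chi_{5b}}^2=P^2/Q^2$ in $\bC(E_2,E_4,\chi_{5a},E_6)$, so $P^2 = {\chi_{5b}}^2 Q^2$ as polynomials; but then $\chi_{5b} = P/Q$ already in the polynomial ring since $\bC[E_2,E_4,\chi_{5a},E_6]$ is integrally closed, i.e. $\chi_{5b}\in\bC[E_2,E_4,\chi_{5a},E_6]\subseteq M_{\mathrm{even}}$, forcing $\chi_{5b}$ to be a linear combination of even-weight monomials — impossible for a weight-$5$ form unless $\chi_{5b}=0$, a contradiction. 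The argument for $(iv)$ is identical: a relation $R + S\chi_{15}=0$ with $R,S\in\bC[E_2,E_4,\chi_{5a},\chi_{5b},E_6]$ gives $\chi_{15}=-R/S$ in the field of fractions, hence by $(ii)$ (which puts ${\chi_{15}}^2$ in the smaller polynomial ring) and integral closure $\chi_{15}\in\bC[E_2,E_4,\chi_{5a},\chi_{5b},E_6]$; comparing weights modulo the relation ${\chi_{5b}}^2\in\bC[E_2,E_4,\chi_{5a},E_6]$, every monomial in this ring of weight $15$ is divisible by $\chi_{5b}$ (since $E_2,E_4,E_6$ have even weight and $\chi_{5a}$ has weight $5$: a weight-$15$ monomial must use an odd number of weight-$5$ factors, i.e. one or three of $\chi_{5a},\chi_{5b}$), so $\chi_{15}=\chi_{5a}T + \chi_{5b}T'$ with $T,T'$ of appropriate weights; then $\delta_{20a}=\chi_{5b}\chi_{15}\in V$, contradicting Section \ref{sec:weight15,20}. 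I should double-check the monomial-weight bookkeeping here, since the reduction using ${\chi_{5b}}^2$ needs care, but the contradiction $\delta_{20a}\in V$ is robust.

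The main obstacle I anticipate is $(ii)$ for ${\chi_{15}}^2$: unlike ${\chi_{5b}}^2$, it is not handed to us by a previous proposition, so I must either carry out the weight-$30$ Fourier-coefficient computation to pin down the $\chi_{15}$-component (analogous to but longer than the weight-$10$ computation in Section \ref{sec:weight5,10}), or manipulate the squared Rankin--Cohen bracket and invoke unique factorization in $\bC[E_2,E_4,\chi_{5a},E_6]$ to justify the division by ${\chi_{5b}}^2$. Everything else is either a direct citation (for $(i)$ via Proposition \ref{prop:diffoperator}, for half of $(ii)$ via Proposition \ref{prop:weight5,10}) or a standard integral-closure / weight-parity argument (for $(iii)$ and $(iv)$). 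Once Lemma \ref{lem:proofmain} is in place, Theorem \ref{thm:main} follows by comparing the module $\bigoplus_k M_k(\Gamma(1,6))$, which by $(iii)$--$(iv)$ is a free $\bC[E_2,E_4,\chi_{5a},E_6]$-module on $1,\chi_{5b},\chi_{15},\chi_{5b}\chi_{15}$, against the generating function: the Hilbert series of $\bC[E_2,E_4,\chi_{5a},E_6]\cdot(1\oplus\chi_{5b}\oplus\chi_{15}\oplus\chi_{5b}\chi_{15})$ is exactly $(1+t^5)(1+t^{15})\big/\big((1-t^2)(1-t^4)(1-t^5)(1-t^6)\big)$, which matches the computed generating function of $\dim_{\bC}M_k(\Gamma(1,6))$, so the inclusion \eqref{eq:sec:main_1} is an equality and the sum is direct.
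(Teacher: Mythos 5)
Your part (i) is exactly the paper's argument, and your part (iii) is essentially sound once you repair its last line: $\bC[E_2,E_4,\chi_{5a},E_6]$ is \emph{not} contained in the even-weight forms (it contains $\chi_{5a}$), so the correct conclusion from $\chi_{5b}\in\bC[E_2,E_4,\chi_{5a},E_6]$ is that the weight-$5$ graded piece of that polynomial ring is $\bC\chi_{5a}$, contradicting the fact that $\chi_{5a},\chi_{5b}$ form a basis of $M_5(\Gamma(1,6))$; the paper argues (iii) differently, by noting that $\alpha^2=\beta^2{\chi_{5b}}^2$ would force ${\chi_{5b}}^2$ to be a square in the polynomial ring. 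The genuine problems are in (ii) and (iv). In (ii) your ``cleanest route'' is false as stated: both formulas in Proposition \ref{prop:weight5,10} involve $E_{10}$, and $E_{10}\notin\bC[E_2,E_4,E_6]$ (otherwise the space $V$ of Section \ref{sec:weight5,10} would have dimension $5$, not $6$), so neither ${\chi_{5a}}^2$ nor ${\chi_{5b}}^2$ is exhibited there as a polynomial in $E_2,E_4,E_6$ alone, and your auxiliary claim $V\subseteq\bC[E_2,E_4,E_6]$ fails for the same reason. What is true --- and is what the paper records in the Appendix --- is that the two formulas have the same $E_{10}$-coefficient, so subtracting them eliminates $E_{10}$ and gives ${\chi_{5b}}^2\in{\chi_{5a}}^2+\bC[E_2,E_4,E_6]\subseteq\bC[E_2,E_4,\chi_{5a},E_6]$. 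For ${\chi_{15}}^2$ your primary route is circular: expanding $\{E_2,E_4,\chi_{5a},E_6\}_*^2\in M_{40}$ ``in the graded ring'' presupposes the structure being proved, and even granted ${\chi_{5b}}^2{\chi_{15}}^2\in\bC[E_2,E_4,\chi_{5a},E_6]$, divisibility by the polynomial ${\chi_{5b}}^2$ is not automatic and is never verified. The paper's route (your fallback) is a direct Fourier-coefficient verification of the weight-$30$ identity displayed in the Appendix; that computation is the actual content of (ii) and cannot be waved away.

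The most serious gap is in (iv). From $R+S\chi_{15}=0$ you place $\chi_{15}$ in the fraction field of $B:=\bC[E_2,E_4,\chi_{5a},\chi_{5b},E_6]$ and, since ${\chi_{15}}^2\in B$, invoke ``integral closure'' to conclude $\chi_{15}\in B$. But $B$ is not a polynomial ring: by (ii) and (iii) it is isomorphic to $A[x]/(x^2-q)$ with $A=\bC[E_2,E_4,\chi_{5a},E_6]$ and $q$ the polynomial expressing ${\chi_{5b}}^2$, and such a hypersurface ring is integrally closed only if $q$ is squarefree in $A$ --- a fact established nowhere and not obviously cheaper than what it is meant to prove. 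Equivalently, writing $\chi_{15}=a+b\chi_{5b}$ with $a,b\in\mathrm{Frac}(A)$, the case $\chi_{15}=b\chi_{5b}$ cannot be excluded without controlling the denominator of $b$. The paper avoids normality of $B$ altogether: it squares the relation $f+\chi_{5b}g=\chi_{15}(h+\chi_{5b}j)$, uses (iii) to split the result into the two equations $fg={\chi_{15}}^2hj$ and $f^2+{\chi_{5b}}^2g^2={\chi_{15}}^2(h^2+{\chi_{5b}}^2j^2)$ inside the polynomial ring $A$, and then runs an infinite descent on the irreducible polynomial ${\chi_{15}}^2$ to force $f=g=h=j=0$. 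Your concluding contradiction (that $\chi_{15}\in B$ would give $\delta_{20a}=\chi_{5b}\chi_{15}\in V$, against Section \ref{sec:weight15,20}) is fine, but only after $\chi_{15}\in B$ has been legitimately established; as written that step is unproven. Your final Hilbert-series comparison for Theorem \ref{thm:main} agrees with the paper.
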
 

\begin{proof} 
(i)\ This is followed from Proposition \ref{prop:diffoperator} 
because $\{ E_2,E_4,\chi_{5a},E_6\}_*$ $=$ $\chi_{5b}\chi_{15}$ $\not= 0$. \\ 
(ii)\ This is proved by comparison of Fourier coefficients. In fact, 
we give the expression of ${\chi_{5a}}^2$ and ${\chi_{5b}}^2$ 
by $E_2$, $E_4$, $\chi_{5a}$ and $E_6$ in Appendix. \\ 
(iii)\ If $\alpha +\beta \chi_{5b}=0$ for $\alpha, \beta \in \bC [E_2,E_4,\chi_{5a},E_6]$, 
then we have $\alpha ^2=\beta ^2{\chi_{5b}}^2$. 
We see from (i) that $\alpha^2$ and $\beta^2$ can be regarded as the squares of 
polynonials with four variables $E_2$, $E_4$, $\chi_{5a}$ and $E_6$, 
while ${\chi_{5b}}^2$ is not so. 
Hence we have $\alpha =\beta =0$. \\ 
(iv)\ If $f+\chi_{5b}g=\chi_{15}(h+\chi_{5b}j)$ for 
$f,g,h,j\in \bC [E_2,E_4,\chi_{5a},E_6]$, then we have 
\[ 2(fg-{\chi_{15}}^2hj)\chi_{5b}=-f^2-{\chi_{5b}}^2g^2+{\chi_{15}}^2h^2+{\chi_{5b}}^2{\chi_{15}}^2j^2. \] 
We see from (ii) and (iii) that 
\begin{align} 
&fg ={\chi_{15}}^2hj, \label{eq:sec:main_2} \\ 
&f^2+{\chi_{5b}}^2g^2={\chi_{15}}^2(h^2+{\chi_{5b}}^2j^2). \label{eq:sec:main_3} 
\end{align} 
We can see that ${\chi_{15}}^2$ is irreducible as a polynomial with $4$ variables 
$E_2$, $E_4$, $\chi_{5a}$ and $E_6$. 
We see from (\ref{eq:sec:main_2}) that either $f$ or $g$ is divisible by ${\chi_{15}}^2$.  
We see from (\ref{eq:sec:main_3}) that both $f$ and $g$ are divisible by 
${\chi_{15}}^2$. 
By dividing (\ref{eq:sec:main_2}) and (\ref{eq:sec:main_3}) by ${\chi_{15}}^2$, 
we obtain equations of the same shape as (\ref{eq:sec:main_2}) and (\ref{eq:sec:main_3}). 
We can repeat this infinitely, so $f$, $g$, $h$ and $j$ must be $0$.  
\end{proof} 
We see from Lemma \ref{lem:proofmain} that 
\begin{align*} 
\bC [E_2,E_4,\chi_{5a},\chi_{5b},E_6,\chi_{15}] 
&= \bC [E_2,E_4,\chi_{5a},\chi_{5b},E_6]\oplus \chi_{15} \bC [E_2,E_4,\chi_{5a},\chi_{5b},E_6] \\ 
&= \bC [E_2,E_4,\chi_{5a},E_6]\oplus \chi_{5b} \bC [E_2,E_4,\chi_{5a},E_6] \\ 
&\hspace{5mm} \oplus \bC [E_2,E_4,\chi_{5a},E_6]\oplus \chi_{5b} \bC [E_2,E_4,\chi_{5a},E_6]. 
\end{align*} 
Hence the generating function of $\mathrm{dim}_{\bC}M_k(\Gamma(1,6))$ is the same 
as that of dimensions of right side of (\ref{eq:sec:main_1}). 
We have completed the proof of Theorem \ref{thm:main}.

%
\section{Appendix} \label{sec:appendix} 
%

We give a table of Fourier coefficients of the generators of 
$\bigoplus _{k=0}^{\infty} M_{k}(\Gamma(1,6))$ in Theorem \ref{thm:main}. 

\begin{center} 
\begin{longtable}{|c||c|c|c|c|c|c|} \hline 
 $\eta$ & $E_2$&$E_4$&$E_6$&$\chi_{5a}$&$\chi_{5b}$&$\chi_{15}$ \\ \hline \hline 
(0,0,0)&1&1&1&0&0&0\\ \hline
(2,1,-1)&48&960/13&2016/341&0&1&0\\ \hline
(2,0,-1)&72&2160/13&7560/341&1&0&0\\ \hline
(4,2,-2)&192&35520/13&1066464/341&0&16&0\\ \hline
(4,0,-2)&216&71280/13&3878280/341&6&0&0\\ \hline
(4,1,-2)&144&95040/13&8134560/341&$-16$&$-27$&0\\ \hline
(5,1,-2)&288&198720/13&24101280/341&0&0&1\\ \hline
(6,3,-3)&192&234240/13&39682944/341&0&12&0\\ \hline
(6,0,-3)&360&546480/13&149423400/341&81&0&0\\ \hline
(6,2,-3)&288&682560/13&239023008/341&40&0&0\\ \hline
(6,1,-3)&144&717120/13&10348128/11&16&135&0\\ \hline
(8,4,-4)&480&1141440/13&546063840/341&0&256&0\\ \hline
(7,2,-4)&288&1157760/13&694612800/341&0&54&112\\ \hline
(7,1,-4)&288&1304640/13&778117536/341&$-68$&0&162\\ \hline
(8,0,-4)&504&2283120/13&1985686920/341&$-92$&0&0\\ \hline
(8,3,-4)&144&2168640/13&2360177568/341&128&$-189$&0\\ \hline
(8,1,-4)&336&3024960/13&3938762016/341&0&85&0\\ \hline
(8,2,-4)&576&3516480/13&4303182240/341&$-224$&$-432$&0\\ \hline
(9,3,-5)&576&4544640/13&6765837120/341&0&0&$-3564$\\ \hline
(9,1,-5)&288&371520&8301345696/341&$-112$&0&$-5103$\\ \hline
(9,2,-5)&288&4752000/13&9366960960/341&112&162&$-1296$\\ \hline
(10,2,-6)&864&6557760/13&12363956640/341&0&0&14976\\ \hline
(10,0,-5)&720&6968160/13&14784532560/341&890&0&0\\ \hline
(12,6,-6)&768&8666880/13&20992277376/341&0&192&0\\ \hline
\end{longtable} 
\end{center} 

In Lemma \ref{lem:proofmain}, we state that ${\chi_{5b}}^2$ and ${\chi_{15}}^2$ 
can be written as polynomials of $4$ variables $E_2$, $E_4$, $\chi_{5a}$ and 
$E_6$. In fact, we have the following relations. 
These are followed from the comparison of Fourier coefficients. 

\vspace{15pt} 
\noindent 
${\chi_{5b}}^2=(5005/8149248)*{E_2}^5-(15587/16298496)*{E_2}^3E_4-(4433/16298496)*\\{E_2}^2E_6+(1859/5432832)*E_2{E_4}^2+(4433/16298496)*E_4E_6+{\chi_{5a}}^2$

\vspace{10pt} 
\begin{longtable}{cl}  
${\chi_{15}}^2$ &$=  
(7193626131746618585/222607917767232721152)*{E_2}^{15}$ \\ 
&$-(307986483294442487/1426973831841235392)*{E_2}^{13}{E_4}$ \\ 
&$+(1416328854305111/54400761917701056)*{E_2}^{12}{E_6}$ \\ 
&$+(4087366592607641/6860451114621324)*{E_2}^{11}{E_4}^2$ \\ 
&$-(192607575137275/1394891331223104)*{E_2}^{10}{E_4}{E_6}$ \\ 
&$+(50704311727294/69507316593)*{E_2}^{10}{\chi_{5a}}^2$ \\ 
&$-(52003816542174887/59873027909422464)*{E_2}^9{E_4}^3$ \\ 
&$+(2912260461769/319066052303232)*{E_2}^9{E_6}^2$ \\ 
&$+(1922370985523/6706208323188)*{E_2}^8{E_4}^2{E_6}$ \\ 
&$-(20825649443174/5346716661)*{E_2}^8{E_4}{\chi_{5a}}^2$ \\ 
&$+(102989732952024139/146356290445254912)*{E_2}^7{E_4}^4$ \\ 
&$-(96923094941/2727060276096)*{E_2}^7{E_4}{E_6}^2$ \\ 
&$+(27583081580/203833773)*{E_2}^7{E_6}{\chi_{5a}}^2$ \\ 
&$-(92968372638167/321897999513024)*{E_2}^6{E_4}^3{E_6}$ \\ 
&$+(65651791909/36815313727296)*{E_2}^6{E_6}^3$ \\ 
&$+(3387092572918/411285897)*{E_2}^6{E_4}^2{\chi_{5a}}^2$ \\ 
&$-(7304217732454747/24392715074209152)*{E_2}^5{E_4}^5$ \\ 
&$+(30622846693/629321602176)*{E_2}^5{E_4}^2{E_6}^2$ \\ 
&$-(256204744/505791)*{E_2}^5{E_4}{E_6}{\chi_{5a}}^2$ \\ 
&$-(10936889634816/19651489)*{E_2}^5{\chi_{5a}}^4$ \\ 
&$+(14944942065833/107299333171008)*{E_2}^4{E_4}^4{E_6}$ \\ 
&$-(27494911499/6135885621216)*{E_2}^4{E_4}{E_6}^3$ \\ 
&$-(1176607216174/137095299)*{E_2}^4{E_4}^3{\chi_{5a}}^2$ \\ 
&$+(10349644/597753)*{E_2}^4{E_6}^2{\chi_{5a}}^2$ \\ 
&$+(36987323269/710702030016)*{E_2}^3{E_4}^6$ \\ 
&$-(49717185583/1887964806528)*{E_2}^3{E_4}^3{E_6}^2$ \\ 
&$+(1709446981/8862945897312)*{E_2}^3{E_6}^4$ \\ 
&$+(773604236/1206117)*{E_2}^3{E_4}^2{E_6}{\chi_{5a}}^2$ \\ 
&$+(2503569715200/1511653)*{E_2}^3{E_4}{\chi_{5a}}^4$ \\ 
&$-(26102557/1042085088)*{E_2}^2{E_4}^5{E_6}$ \\ 
&$+(2820958987/943982403264)*{E_2}^2{E_4}^2{E_6}^3$ \\ 
&$+(509138188/116281)*{E_2}^2{E_4}^4{\chi_{5a}}^2$ \\ 
&$-(2420960/45981)*{E_2}^2{E_4}{E_6}^2{\chi_{5a}}^2$ \\ 
&$-(31993344000/57629)*{E_2}^2{E_6}{\chi_{5a}}^4$ \\ 
&$+(18421/4583952)*{E_2}{E_4}^4{E_6}^2$ \\ 
&$-(159653813/681765069024)*{E_2}{E_4}{E_6}^4$ \\ 
&$-(843440/3069)*{E_2}{E_4}^3{E_6}{\chi_{5a}}^2$ \\ 
&$-(136400/66417)*{E_2}{E_6}^3{\chi_{5a}}^2$ \\ 
&$-(137631744000/116281)*{E_2}{E_4}^2{\chi_{5a}}^4$ \\ 
&$-(4433/20627784)*{E_4}^3{E_6}^3$ \\ 
&$+(39651821/4431472948656)*{E_6}^5$ \\ 
&$-(301621736/348843)*{E_4}^5{\chi_{5a}}^2$ \\ 
&$+(1100/27)*{E_4}^2{E_6}^2{\chi_{5a}}^2$ \\ 
&$+(3018240000/4433)*{E_4}{E_6}{\chi_{5a}}^4$ \\ 
&$+(40993977139200000/19651489)*{\chi_{5a}}^6$  
\end{longtable}

%

\vspace*{5mm}
\noindent
Hidetaka Kitayama\\
Department of Mathematics\\
Osaka University\\ 
Machikaneyama 1-1, Toyonaka\\ 
Osaka, 560-0043, Japan\\
E-mail: \texttt{h-kitayama@cr.math.sci.osaka-u.ac.jp}\\

\end{document}